\theoremstyle{plain}
\newtheorem{theorem}{Theorem}[section]
\newtheorem{nontheorem}[theorem]{Non-theorem}
\newtheorem{proposition}[theorem]{Proposition}
\newtheorem{lemma}[theorem]{Lemma}
\newtheorem{corollary}[theorem]{Corollary}
\newtheorem{conjecture}[theorem]{Conjecture}
\newtheorem{claim}{Claim}[theorem]
\theoremstyle{definition}
\newtheorem{definition}[theorem]{Definition}
\newcommand{\dash}{\nobreakdash-\hspace{0mm}}
\title[Reduced clique graphs]{Reduced clique graphs: a correction to ``Chordal graphs and their clique graphs"}
\author[Mayhew]{Dillon Mayhew}
\address{School of Computing,
University of Leeds,
United Kingdom}
\email{d.mayhew@leeds.ac.uk}
\author[Probert]{Andrew Probert}
\email{andrew.mprobert@gmail.com}
\begin{document}

\begin{abstract}
Galinier, Habib, and Paul introduced the reduced clique graph of a chordal graph $G$.
The nodes of the reduced clique graph are the maximal cliques of $G$, and two nodes are joined by an edge if and only if they form a non-disjoint separating pair of cliques in $G$.
In this case the weight of the edge is the size of the intersection of the two cliques.
A clique tree of $G$ is a tree with the maximal cliques of $G$ as its nodes, where for any $v\in V(G)$, the subgraph induced by the nodes containing $v$ is connected.
Galinier et al.\ prove that a spanning tree of the reduced clique graph is a clique tree if and only if it has maximum weight, but their proof contains an error.
We explain and correct this error.

In addition, we initiate a study of the structure of reduced clique graphs by proving that they cannot contain any induced cycle of length five (although they may contain induced cycles of length three or any even integer greater than two).
We show that no cycle of length four or more is isomorphic to a reduced clique graph.
We prove that the class of clique graphs of chordal graphs is not comparable to the class of reduced clique graphs of chordal graphs by providing examples that are in each of these classes without being in the other.
\end{abstract}

\maketitle

\section{Introduction}

We consider only simple graphs.
A \emph{chord} of a cycle is an edge that joins two vertices of the cycle without being in the cycle itself.
A graph is \emph{chordal} if any cycle with at least four vertices has a chord.
A \emph{clique} is a set of pairwise adjacent vertices.
If $S$ is a set of vertices and $P$ is a path, then $P$ is \emph{$S$\dash avoiding} if no internal vertex of $P$ is in $S$.
Assuming that $a$ and $b$ are distinct vertices, an \emph{$ab$\dash separator} is a set $S$ of vertices not containing either $a$ or $b$ such that there is no $S$\dash avoiding path from $a$ to $b$.
If, in addition, $S$ does not properly contain an $ab$\dash separator then it is a \emph{minimal $ab$\dash separator}.

If $G$ is a chordal graph, then $C(G)$ is the corresponding \emph{clique graph} (also known as the \emph{clique intersection graph}).
The vertices of $C(G)$ are the maximal cliques of $G$, and two maximal cliques are adjacent in $C(G)$ if and only if they have a non-empty intersection.
The vertices of the \emph{reduced clique graph}, $C_{R}(G)$, are again the maximal cliques of $G$, but $C$ and $C'$ are adjacent in $C_{R}(G)$ if and only if $C\cap C'\ne \emptyset$ and $C$ and $C'$ form a \emph{separating pair}: that is, there is no $(C\cap C')$\dash avoiding path from a vertex in $C-C'$ to a vertex in $C'-C$.
Note that the vertices of $C_{R}(G)$ are identical to the vertices of $C(G)$, and every edge of $C_{R}(G)$ is an edge of $C(G)$.

\begin{figure}[htb]
\centering
\includegraphics{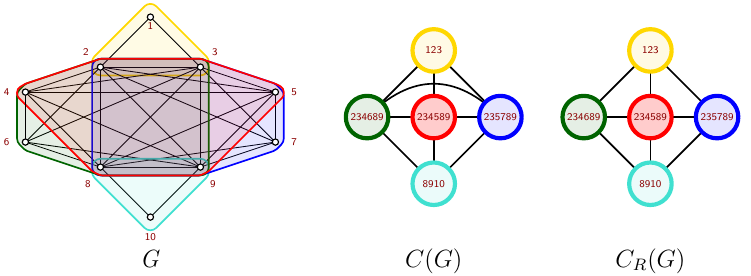}
\caption{A chordal graph, its clique graph, and its reduced clique graph.}
\label{Fig2}
\end{figure}

The reduced clique graph was introduced in~\cite{GHP95} (where it is called a clique graph) and studied further in~\cites{HS12, HL09, MUU10, McK11}.

Let $G$ be a graph, and let $T$ be a tree whose vertices are the maximal cliques of $G$.
If, for every $v\in V(G)$, the maximal cliques of $G$ that contain $v$ induce a connected subgraph of $T$, then $T$ is a \emph{clique tree}.
Clique trees were introduced by Gavril~\cite{Gavril}, who proved that a graph has a clique tree exactly when it is chordal.

We weight each edge of $C_{R}(G)$ as follows: the edge joining cliques $C$ and $C'$ is weighted with $|C\cap C'|$.
The following result is~\cite{GHP95}*{Theorem 6}.

\begin{theorem}
\label{maintheorem}
Let $G$ be a connected chordal graph.
Let $T$ be a spanning tree of $C_{R}(G)$.
Then $T$ is a clique tree if and only if it is a maximum-weight spanning tree.
\end{theorem}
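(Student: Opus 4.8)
The plan is to reduce the statement to the classical fact that the clique trees of a chordal graph are exactly the maximum-weight spanning trees of the \emph{full} clique graph $C(G)$, and then to transfer this to $C_{R}(G)$ by showing that every clique tree already lives inside $C_{R}(G)$. Throughout, write $w(T)=\sum_{\{C,C'\}\in E(T)}|C\cap C'|$ for the weight of a spanning tree $T$, let $K_v$ be the set of maximal cliques containing a vertex $v$, and let $k_v=|K_v|$.

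First I would establish the weight-counting identity on $C(G)$. Rewriting $|C\cap C'|$ as the number of vertices lying in both $C$ and $C'$ and interchanging the order of summation gives $w(T)=\sum_{v\in V(G)}|E(T[K_v])|$, where $T[K_v]$ is the subgraph of $T$ induced by $K_v$. Since $T[K_v]$ is a forest on $k_v$ nodes it has at most $k_v-1$ edges, with equality exactly when $T[K_v]$ is connected, i.e.\ when the cliques containing $v$ induce a subtree of $T$. Hence $w(T)\le\sum_{v}(k_v-1)$, with equality if and only if $T$ is a clique tree. Because $G$ is chordal, Gavril's theorem supplies a clique tree, so the bound $\sum_{v}(k_v-1)$ is attained; it is therefore the maximum weight of a spanning tree of $C(G)$, and the maximum-weight spanning trees of $C(G)$ are precisely its clique trees.

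The step special to the reduced clique graph, and the one I expect to be the crux, is to show that every edge of a clique tree is genuinely an edge of $C_{R}(G)$, i.e.\ a non-disjoint separating pair. Given a clique tree $T$ and an edge $\{C,C'\}$, delete it to split $T$ into subtrees $T_{1}\ni C$ and $T_{2}\ni C'$, and let $V_{i}$ be the union of the cliques in $T_{i}$. The subtree property forces $V_{1}\cap V_{2}=C\cap C'$, since any $v$ in both unions has its cliques forming a subtree of $T$ that straddles the deleted edge, whence $v\in C\cap C'$; and $V_{1}\cup V_{2}=V(G)$ because every vertex lies in some maximal clique. Connectivity of $G$ then rules out $C\cap C'=\emptyset$, as otherwise $V_{1},V_{2}$ would partition $V(G)$ with no edge across. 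Finally, for $a\in C-C'$ and $b\in C'-C$ (both nonempty, as distinct maximal cliques are incomparable), I would trace any path from $a$ to $b$ and inspect the first vertex $x_{i}$ landing in $V_{2}$: the edge $x_{i-1}x_{i}$ lies in a common maximal clique, which must belong to $T_{1}$, forcing $x_{i}\in V_{1}\cap V_{2}=C\cap C'$. As $a,b\notin C\cap C'$, this $x_{i}$ is an internal vertex, so no $(C\cap C')$\dash avoiding path from $a$ to $b$ exists. Thus $C$ and $C'$ form a separating pair, every clique tree is a spanning tree of $C_{R}(G)$, and in particular $C_{R}(G)$ is connected.

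The conclusion is then immediate. Since $C_{R}(G)$ is a spanning subgraph of $C(G)$, every spanning tree of $C_{R}(G)$ is one of $C(G)$, so its weight is at most $\sum_{v}(k_v-1)$, with equality exactly when it is a clique tree. Clique trees attain this value and lie in $C_{R}(G)$, so $\sum_{v}(k_v-1)$ is also the maximum weight over spanning trees of $C_{R}(G)$; hence such a tree is maximum-weight if and only if its weight equals $\sum_{v}(k_v-1)$, which holds if and only if it is a clique tree. The main obstacle is the separating-pair verification of the third paragraph: it is precisely the point at which the passage from $C(G)$ to $C_{R}(G)$ could break down, and I suspect it is where the original argument needs repair.
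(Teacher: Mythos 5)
Your proof is correct, and it takes a genuinely different route from the paper's. For the equivalence over the full clique graph $C(G)$ you use the classical double-counting identity $w(T)=\sum_{v}|E(T[K_{v}])|\leq \sum_{v}(k_{v}-1)$, with equality exactly for clique trees; the paper instead proves the corresponding statement (Lemma \ref{weighted-clique-graph}) by an edge-exchange argument adapted from Blair and Peyton, because it actually establishes the more general Theorem \ref{maintheorem2} for arbitrary \emph{legitimate weightings}, where your identity is unavailable --- the counting step depends on the weight $|C\cap C'|$ being additive over vertices, which fails for a general monotone $\sigma$. Your direct verification that every clique-tree edge is a non-disjoint separating pair (split $T$ at the edge, show $V_{1}\cap V_{2}=C\cap C'$, then trace an arbitrary path and inspect the first vertex entering $V_{2}$) is sound, and it is in fact shorter than the paper's Proposition \ref{clique-tree-edges}, which argues by contradiction through the nontrivial Proposition \ref{CRG-path}; you correctly identified this as the crux, since it is exactly the step botched in the original Galinier--Habib--Paul proof. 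Your final paragraph also neatly sidesteps the subtlest point in the paper's argument: the paper must show that a maximum-weight spanning tree of $C_{R}(G)$ remains maximum-weight in $C(G)$, again invoking Proposition \ref{CRG-path} to handle edges of $C(G)$ absent from $C_{R}(G)$, whereas in your setting this is automatic because the global bound $\sum_{v}(k_{v}-1)$ is attained by a tree lying inside $C_{R}(G)$, so the maximum over the subgraph equals the maximum over $C(G)$. What your route gives up is precisely what the paper's heavier machinery buys: the extension to legitimate weightings (needed for the authors' matroid application) and the byproducts that every edge of $C_{R}(G)$ lies in some clique tree and that $C_{R}(G)$ is the union of all clique trees --- but none of that is required for Theorem \ref{maintheorem} as stated.
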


Although the statement of Theorem~\ref{maintheorem} is correct, the proof in~\cite{GHP95}*{Theorem 6} is not.
The issue arises in the proof that a maximum-weight spanning tree must be a clique tree.
We illustrate the error by using the same argument to prove a false statement.

\begin{nontheorem}
\label{nontheorem}
Let $G$ be a chordal graph.
Let $C_{0},C_{1},\ldots, C_{n}$ be the sequence of maximal cliques in a path of $C_{R}(G)$ where $n>1$.
Assume that there is a vertex $v$ of $G$ such that $v$ is in $C_{0}\cap C_{n}$, but in none of the cliques $C_{1},\ldots, C_{n-1}$.
Then $C_{0}$ and $C_{n}$ are adjacent in $C_{R}(G)$.
\end{nontheorem}

\begin{proof}[Non-proof]
Consider the subgraph $G'$ of $G$ induced by $C_{0}\cup C_{1}\cup\cdots \cup C_{n}$.
Thus $G'$ is chordal.
From~\cite{Rose}*{Corollary 2} we see that either $v$ is a \emph{simplicial vertex} (meaning that the neighbours of $v$ in $G'$ form a clique), or there is a pair, $a$, $b$, of vertices such that $v$ belongs to a minimal $ab$\dash separator of $G'$.
In the former case $v$ is in a unique maximal clique of $G'$ (\cite{BP93}*{Theorem 3.1}).
But $C_{0}$ and $C_{n}$ are distinct maximal cliques of $G'$ that contain $v$.
Therefore we can let $S$ be a minimal $ab$\dash separator of $G'$, where $v$ is in $S$.
The proof of~\cite{Buneman}*{Lemma 2.3} shows that there are two distinct maximal cliques, $D_{a}$ and $D_{b}$, of $G'$ such that $D_{a}$ and $D_{b}$ properly contain $S$, and $D_{a}-S$ is in the same connected component of $G'-S$ as $a$, while $D_{b}-S$ is in the same component as $b$.
Thus $D_{a}$ and $D_{b}$ are maximal cliques of $G'$ that contain $v$.
But the only maximal cliques of $G'$ that contain $v$ are $C_{0}$ and $C_{n}$.
Therefore we can assume without loss of generality that $D_{a}=C_{0}$ and $D_{b}=C_{n}$.
Any path from a vertex of $C_{0}-C_{n}$ to a vertex of $C_{n}-C_{0}$ must contain a vertex in $S=D_{a}\cap D_{b}=C_{0}\cap C_{n}$.
Therefore $C_{0}$ and $C_{n}$ form a non-disjoint separating pair, so $C_{0}$ and $C_{n}$ are adjacent in $C_{R}(G)$, as claimed.
\end{proof}

We can see that this non-theorem is, indeed, not a theorem by examining Figure~\ref{Fig2}.
Set $C_{0}$, $C_{1}$, and $C_{2}$ to be the maximal cliques $\{2,3,4,6,8,9\}$, $\{1,2,3\}$, and $\{2,3,5,7,8,9\}$, respectively.
Thus $C_{0}, C_{1}, C_{2}$ is the vertex sequence of a path in $C_{R}(G)$.
The vertex $8$ is in $C_{0}\cap C_{2}$, but not in $C_{1}$.
However $C_{0}$ and $C_{2}$ are not adjacent in $C_{R}(G)$.
The error in the proof lies in the claim that ``the only maximal cliques of $G'$ that contain $v$ are $C_{0}$ and $C_{n}$".
This need not be true.
Indeed, $\{2,3,4,5,8,9\}$ is a maximal clique in the subgraph induced by $C_{0}\cup C_{1}\cup C_{2}$, and it contains $8$, but it is not equal to either $C_{0}$ or $C_{2}$.
Exactly the same error appears in the proof of~\cite{GHP95}*{Theorem 6}.
Nonetheless, Theorem~\ref{maintheorem} is true, and we prove it in the next section.

\section{Reduced clique graphs and clique trees}

In~\cite{MR4642470} we will apply our main theorem to some matroid problems.
For these purposes we would like to extend its scope somewhat.
Instead of weighting the edges of $C_{R}(G)$ with sizes of  intersections, we consider more general weightings.

\begin{definition}
Let $G$ be a chordal graph.
We consider a function $\sigma$ which takes
\[
\{\emptyset\}\cup\{C\cap C'\colon C,C\ \text{are distinct maximal cliques of}\ G\}
\]
to non-negative integers.
We insist that $\sigma(\emptyset)=0$ and if $X$ and $X'$ are in the domain of $\sigma$ and $X\subset X'$, then $\sigma(X)<\sigma(X')$.
In such a case the function $\sigma$ is a \emph{legitimate weighting} of $G$.
We weight an edge between $C$ and $C'$ with $\sigma(C\cap C')$.
\end{definition}

\begin{theorem}
\label{maintheorem2}
Let $G$ be a connected chordal graph and let
$\sigma$ be a legitimate weighting of $G$.
Every clique tree is a spanning tree of $C_{R}(G)$ and every edge of $C_{R}(G)$ is contained in a clique tree.
Moreover, a spanning tree of $C_{R}(G)$ is a clique tree if and only if it has maximum weight amongst all spanning trees.
\end{theorem}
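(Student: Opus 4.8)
The plan is to split Theorem \ref{maintheorem2} into its three assertions and to treat the weight characterisation as a pair of exchange arguments, the delicate half being the one where Galinier et al.\ erred.

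First I would show that every clique tree $T$ is a spanning tree of $C_R(G)$. Since $T$ and $C_R(G)$ have the same vertex set, it suffices to check that each edge $CC'$ of $T$ lies in $C_R(G)$. Deleting $CC'$ splits $T$ into two subtrees; let $A$ and $B$ be the unions of the cliques in each. Applying the subtree condition to individual vertices shows that $A\cap B=C\cap C'$, that this set is non-empty (here connectedness of $G$ is used), and that no edge of $G$ joins $A\setminus(C\cap C')$ to $B\setminus(C\cap C')$. Hence $C\cap C'$ separates $C\setminus C'$ from $C'\setminus C$, so $C,C'$ is a non-disjoint separating pair and $CC'\in E(C_R(G))$.

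Next I would record the \emph{clique-tree path property}: if $T$ is a clique tree and $D,D'$ are any two cliques, then every clique on the $T$-path from $D$ to $D'$ contains $D\cap D'$, so every edge of that path has separator containing $D\cap D'$ and hence, by monotonicity of the legitimate weighting $\sigma$, weight at least $\sigma(D\cap D')$. Combined with the standard optimality criterion (a spanning tree has maximum weight exactly when, for every non-tree edge $f$, each edge on the tree-path joining the ends of $f$ has weight at least that of $f$), this shows at once that every clique tree is a maximum-weight spanning tree of $C_R(G)$; in particular all clique trees share a single weight $W^{\ast}$.

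The heart of the proof, and the exact location of the error in \cite{GHP95}, is the converse: a maximum-weight spanning tree $T$ of $C_R(G)$ is a clique tree. I would argue by contradiction. If $T$ is not a clique tree then, for some vertex $v$, the set $\mathcal C_v$ of cliques containing $v$ is disconnected in $T$; yet $\mathcal C_v$ is connected in $C_R(G)$, since in any clique tree the cliques containing $v$ form a subtree whose edges belong to $C_R(G)$ by the first step. So there is an edge $f=EE'$ of $C_R(G)$ whose ends lie in distinct components of $T[\mathcal C_v]$, and necessarily $v\in E\cap E'=S_f$, a minimal separator of $G$. Along the $T$-path from $E$ to $E'$, maximality forces every edge to have weight at least $\sigma(S_f)$, so by strict monotonicity no separator on the path is a proper subset of $S_f$. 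Tracking which component of $G-S_f$ contains the currently visited clique, the ends $E,E'$ sit on opposite sides (as $f$ is a separating pair), and the side can change across an edge only when that edge's separator is contained in $S_f$; with the weight bound this forces the side-changing edge to have separator exactly $S_f$, hence to contain $v$ and to lie inside $T[\mathcal C_v]$. The main obstacle is to turn this into the contradiction that $E$ and $E'$ lie in the \emph{same} component of $T[\mathcal C_v]$: this is precisely where the naive reasoning of the Non-theorem fails, because intermediate cliques may re-route the path. I expect to repair it by choosing $f$ extremally (minimising the $T$-distance between its ends, or the weight of $S_f$) and by exploiting that every tree edge is itself a separating pair and that minimal separators of a chordal graph are cliques, so that the side-changing edge of separator $S_f$ can be spliced to $E$ and $E'$ within $\mathcal C_v$.

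Finally, the displayed equivalence follows by combining the two halves with the fact that $W^{\ast}$ is the common maximum weight. For the remaining clause, that every edge $f=CC'$ of $C_R(G)$ lies in a clique tree, set $S=C\cap C'$ and take any clique tree $T^{\ast}$. On the $T^{\ast}$-path from $C$ to $C'$ every clique contains $S$, and the same component-tracking shows that some edge $g$ of this path has separator precisely $S$; then $T^{\ast}-g+f$ is a spanning tree of $C_R(G)$ of weight $W^{\ast}$, hence a clique tree by the characterisation just proved, and it contains $f$.
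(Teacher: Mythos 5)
Your first step (every clique-tree edge is a non-disjoint separating pair, hence a $C_R(G)$-edge), your clique-tree path property, and your final exchange producing a clique tree through a prescribed edge of $C_R(G)$ are all sound, and broadly parallel the paper's Propositions \ref{clique-tree-edges} and \ref{clique-tree-path} and the last paragraph of its proof of Theorem \ref{maintheorem2}. But there is a genuine gap at the converse --- that a maximum-weight spanning tree $T$ of $C_R(G)$ is a clique tree --- and you have located it yourself. After finding the edge $f=EE'$ of $C_R(G)$ joining distinct components of $T[\mathcal{C}_v]$, your component-tracking argument controls only the side-changing edges of the $T$-path $P$ from $E$ to $E'$: those do have separator exactly $S_f$ and so lie in $T[\mathcal{C}_v]$. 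It says nothing about the subpaths of $P$ in between, where the consecutive separators merely have weight at least $\sigma(S_f)$ and may be \emph{incomparable} with $S_f$, so the intermediate cliques need not contain $v$. Consequently you cannot conclude that $E$ and the nearest side-changing endpoint $D$ lie in the same component of $T[\mathcal{C}_v]$ --- this is precisely the failure mode of the paper's Non-theorem \ref{nontheorem}. The extremal choices you propose do not obviously close the hole: minimising the $T$-distance between the ends of $f$ leaves you with the pair $(E,D)$, both containing $v$ and possibly in different components of $T[\mathcal{C}_v]$, but $ED$ need not be an edge of $C_R(G)$, so the minimality of $f$ cannot be invoked; minimising $\sigma(S_f)$ runs into the same re-routing problem; and the fact that minimal separators of a chordal graph are cliques does not bridge it. ``I expect to repair it'' is a plan, not a proof, and the missing step is the entire content of the theorem.

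For comparison, the paper closes exactly this hole by a detour through the full clique graph $C(G)$ rather than working inside $C_R(G)$ alone. Proposition \ref{CRG-path} (proved by induction on the size of the intersection, via Proposition \ref{clique-sequence}) shows that non-adjacent cliques of $C_R(G)$ with non-empty intersection are joined by a $C_R(G)$-path all of whose consecutive separators \emph{properly} contain that intersection; with strictness of the legitimate weighting this implies that a maximum-weight spanning tree of $C_R(G)$ is also a maximum-weight spanning tree of $C(G)$. Then Lemma \ref{weighted-clique-graph} (adapted from Blair--Peyton) runs an exchange against a clique tree $T'$ chosen to share as many edges as possible with $T$: weight-equality plus strictness forces $C\cap C'=D\cap D'$ for the swapped pair $e,f$, and the disconnected-vertex analysis is applied to $(T'-f)\cup e$, a tree one edge closer to $T$, yielding the contradiction. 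To complete your direct $C_R(G)$-only route you would need some substitute for Proposition \ref{CRG-path} or for this exchange against an extremal clique tree; as written, the key claim is unproven.
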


Note that the function that takes each intersection $C\cap C'$ to $|C\cap C'|$ is a legitimate weighting, so Theorem~\ref{maintheorem2} does indeed imply Theorem~\ref{maintheorem}.
We now start proving the intermediate results required for the proof of Theorem~\ref{maintheorem2}.

\begin{proposition}
\label{clique-sequence}
Let $G$ be a chordal graph, and let $C$ and $C'$ be maximal cliques of $G$.
Let $S$ be a set of vertices that contains $C\cap C'$.
Let $v_{0},v_{1},\ldots, v_{k}$ be the vertex sequence of $P$, a shortest-possible $S$\dash avoiding path from a vertex in $C-C'$ to a vertex in $C'-C$.
Then $(C\cap C')\cup\{v_{i},v_{i+1}\}$ is a clique for each $i=0,1,\ldots, k-1$.
\end{proposition}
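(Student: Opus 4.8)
The plan is to reduce everything to a single adjacency claim: that every vertex $v_i$ of $P$ is adjacent to every vertex of $C\cap C'$. Granting this, the proposition follows at once, since $C\cap C'$ is itself a clique (being the intersection of the cliques $C$ and $C'$), the pair $v_i,v_{i+1}$ is an edge of $P$, and therefore $(C\cap C')\cup\{v_i,v_{i+1}\}$ is a set of pairwise adjacent vertices.

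First I would record two preliminary facts. Because $P$ is a \emph{shortest} $S$-avoiding path, it must be chordless: any chord $v_iv_j$ with $j\ge i+2$ would let us replace $P$ by the strictly shorter path $v_0,\ldots,v_i,v_j,\ldots,v_k$, which is still $S$-avoiding (its internal vertices form a subset of those of $P$), contradicting minimality. Second, fixing an arbitrary $w\in C\cap C'$, I would note that $w$ differs from every $v_i$: the internal vertices $v_1,\ldots,v_{k-1}$ avoid $S\supseteq C\cap C'$, while $v_0\in C-C'$ and $v_k\in C'-C$ cannot equal $w$ since $w$ lies in both $C$ and $C'$. (In passing, $k\ge 1$, as $C-C'$ and $C'-C$ are disjoint.)

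The heart of the argument is then to show that this $w$ is adjacent to every $v_i$. The two endpoints are already handled: $v_0\in C$ and $w\in C$ give adjacency, and likewise $v_k\in C'$ and $w\in C'$. Let $A=\{i: w\text{ is adjacent to }v_i\}$, so $0,k\in A$, and suppose for contradiction that $A\ne\{0,1,\ldots,k\}$. Then there are members $a<b$ of $A$ with no index of $A$ strictly between them and $b>a+1$. I would examine the cycle $w,v_a,v_{a+1},\ldots,v_b,w$, whose length is at least four. Its only candidate chords are edges inside the subpath $v_a,\ldots,v_b$ (excluded because $P$ is chordless) and edges from $w$ to some $v_j$ with $a<j<b$ (excluded because such $j$ are not in $A$). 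Thus this would be an induced cycle of length at least four, contradicting chordality of $G$. Hence $A=\{0,\ldots,k\}$, and since $w\in C\cap C'$ was arbitrary, every $v_i$ is adjacent to all of $C\cap C'$, which completes the proof.

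I expect the main obstacle to be arranging the adjacency argument so that chordality is applied to a genuinely induced cycle. The key insight making this clean is that the shortest-path hypothesis forces $P$ to be chordless, which restricts every possible chord of the relevant cycle to an edge at $w$; the ``gap'' between two consecutive neighbours of $w$ along $P$ then produces exactly the forbidden long induced cycle.
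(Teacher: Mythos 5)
Your proposal is correct and follows essentially the same route as the paper's proof: both exploit the shortest-path hypothesis to make $P$ chordless, then take a ``gap'' between two consecutive neighbours $v_a$ and $v_b$ of a vertex $w\in C\cap C'$ along $P$ (the paper's $v_p$ and $v_q$) and observe that the cycle $w,v_a,\ldots,v_b,w$ would be an induced cycle of length at least four, contradicting chordality. Your only departures are organizational --- isolating the chordlessness of $P$ as an explicit preliminary step and phrasing the gap argument via the set $A$ of neighbours of $w$ --- neither of which changes the substance.
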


\begin{proof}
If $C\cap C'=\emptyset$ then the result holds trivially, so we assume $C\cap C'$ is non-empty.
Note that every vertex in $C\cap C'$ is adjacent to $v_{0}$, and also to $v_{k}$, since these vertices are in $C-C'$ and $C'-C$.
Now the result can only fail if there is a vertex $x\in C\cap C'$ that is not adjacent to $v_{i}$ for some $i\in\{1,\ldots, k-1\}$.
Let $p$ be the largest integer such that $p<i$ and $x$ is adjacent to $v_{p}$.
Similarly, let $q$ be the smallest integer such that $q>i$ and $x$ is adjacent to $v_{q}$.
Consider the cycle obtained by adding the edges $v_{p}x$ and $v_{q}x$ to $v_{p},v_{p+1},\ldots, v_{q}$.
This cycle contains the distinct vertices $v_{p}$, $v_{i}$, $v_{q}$, and $x$, so it must contain a chord.
No chord can join two vertices in the path $P$, since $P$ is as short as possible.
Thus any chord is incident with $x$.
But $x$ is not adjacent to any of the vertices in $v_{p+1},\ldots, v_{q-1}$ by the choice of $p$ and $q$, so we have a contradiction.
\end{proof}

\begin{proposition}
\label{CRG-path}
Let $G$ be a chordal graph, and let $C$ and $C'$ be maximal cliques of $G$ where $C\cap C'\ne \emptyset$.
If $C$ and $C'$ are not adjacent in $C_{R}(G)$, then they are joined by a path of $C_{R}(G)$ with vertex sequence $C_{0},C_{1},\ldots, C_{s}$, where each $C_{i}\cap C_{i+1}$ properly contains $C\cap C'$.
\end{proposition}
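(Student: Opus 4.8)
The plan is to argue by strong induction on $|V(G)| - |C\cap C'|$, using a shortest avoiding path together with Proposition \ref{clique-sequence} to produce a chain of maximal cliques that witnesses the connection. First I would record the consequence of non-adjacency. Writing $S = C \cap C'$, the fact that $C$ and $C'$ fail to form a separating pair means there is an $S$\dash avoiding path from a vertex of $C - C'$ to a vertex of $C' - C$, and I would fix a shortest such path $P$ with vertex sequence $v_{0}, v_{1}, \ldots, v_{k}$. Applying Proposition \ref{clique-sequence} with the set $S$ itself shows that $S \cup \{v_{i}, v_{i+1}\}$ is a clique for each $i$. I would also note that $v_{0} \in C$, that $v_{k} \in C'$, and that none of $v_{0}, \ldots, v_{k}$ lies in $S$: the endpoints avoid $S$ because they lie in $C - C'$ and $C' - C$, and the internal vertices avoid $S$ because $P$ is $S$\dash avoiding.

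Next I would extend each clique $S \cup \{v_{i}, v_{i+1}\}$ to a maximal clique $E_{i}$ of $G$ and consider the sequence $C, E_{0}, E_{1}, \ldots, E_{k-1}, C'$. Every consecutive pair in this sequence has intersection properly containing $S$: indeed $C \cap E_{0} \supseteq S \cup \{v_{0}\}$, $E_{k-1} \cap C' \supseteq S \cup \{v_{k}\}$, and $E_{i} \cap E_{i+1} \supseteq S \cup \{v_{i+1}\}$, where in each case the extra vertex lies outside $S$. Any consecutive repetitions, and any coincidences with $C$ or $C'$, can simply be deleted, leaving a sequence of distinct maximal cliques whose consecutive intersections all strictly contain $S$.

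Finally I would assemble the required path. Processing each consecutive pair $\{X, Y\}$ of the sequence above, I observe that $X \cap Y \supsetneq S$, so $|V(G)| - |X \cap Y|$ is strictly smaller than $|V(G)| - |S|$. If $X$ and $Y$ are adjacent in $C_{R}(G)$, I take the single edge $XY$; otherwise the inductive hypothesis, applied to the pair $X, Y$, supplies a path of $C_{R}(G)$ from $X$ to $Y$ whose every edge-intersection properly contains $X \cap Y$, and hence properly contains $S$. Concatenating these pieces yields a walk in $C_{R}(G)$ from $C$ to $C'$ all of whose edges have intersection properly containing $S$; extracting a simple path from this walk discards only edges and preserves the endpoints, so it gives the desired sequence $C_{0}, \ldots, C_{s}$.

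The main obstacle is setting up the induction so that it is genuinely well-founded and so that the target property survives the recursion. It is essential that every consecutive intersection in the chain strictly contains $S$, since this is exactly what makes the recursive calls smaller in the measure $|V(G)| - |\,\cdot\,|$; the verification of this, via Proposition \ref{clique-sequence}, is where the real work lies. I must also check the bookkeeping that ``edge-intersection properly contains $S$'' is inherited when passing from the recursively obtained paths, whose intersections only properly contain the possibly larger set $X \cap Y$, back to the original set $S$. Everything beyond these two points is routine.
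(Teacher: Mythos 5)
Your proposal is correct and takes essentially the same approach as the paper's proof: the paper likewise fixes a shortest $(C\cap C')$\dash avoiding path, uses Proposition \ref{clique-sequence} to cover its edges with maximal cliques whose consecutive intersections properly contain $S\cup\{v_i\}$, and handles non-adjacent consecutive pairs by an extremal choice of counterexample with $|C\cap C'|$ as large as possible, which is exactly your strong induction on $|V(G)|-|C\cap C'|$ in different clothing. The remaining differences (indexing of the covering cliques, deleting repetitions versus noting the $D_i$ are distinct) are cosmetic.
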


\begin{proof}
Assume this fails for $C$ and $C'$, and they have been chosen so that $C\cap C'$ is as large as possible.
Let $S$ be $C\cap C'$.
Because $C$ and $C'$ are not adjacent in $C_{R}(G)$, but $S\ne \emptyset$, it follows that there is an $S$\dash avoiding path from a vertex in $C-C'$ to a vertex in $C'-C$.
Let $v_{0},v_{1},\ldots, v_{k}$ be the vertex sequence of such a path, where $k$ is as small as possible.
We assume $v_{0}$ is in $C-C'$ while $v_{k}$ is in $C'-C$.
We apply Proposition~\ref{clique-sequence} and for each $i=1,\ldots, k$, we let $D_{i}$ be a maximal clique of $G$ that contains $S\cup\{v_{i-1},v_{i}\}$.
Set $D_{0}$ to be $C$ and set $D_{k+1}$ to be $C'$.
Note that $D_{i}\ne D_{j}$ when $i< j$, because $v_{i-1}$ is not adjacent to $v_{j}$ by the minimality of $k$.
For each $i=0,1,\ldots, k$, the intersection of $D_{i}$ and $D_{i+1}$ contains $S$ as well as $v_{i}$.
If $D_{i}$ and $D_{i+1}$ are adjacent in $C_{R}(G)$ then we let $P_{i}$ be the path of $C_{R}(G)$ consisting of $D_{i}$, $D_{i+1}$, and the edge between them.
Otherwise $D_{i}$ and $D_{i+1}$ are not adjacent in $C_{R}(G)$ and the assumption on the cardinality of $S$ means that there is a path $P_{i}$ of $C_{R}(G)$ from $D_{i}$ to $D_{i+1}$ such that every intersection of consecutive cliques in $P_{i}$ properly contains $S\cup v_{i}$.
We concatenate the paths $P_{0}, P_{1},\ldots, P_{k}$ and obtain a walk of $C_{R}(G)$ from $C$ to $C'$.
The intersection of any two consecutive cliques in this walk properly contains $S$.
It follows that there is a path of $C_{R}(G)$ from $C$ to $C'$ with exactly the same property, and now $C$ and $C'$ fail to provide a counterexample after all.
\end{proof}

Figure~\ref{Fig3} illustrates Proposition~\ref{CRG-path}.
The intersection of cliques $C=\{1,2,3\}$ and $C'=\{3,5,7,8\}$ is $\{3\}\ne\emptyset$, but $C$ and $C'$ are not adjacent in $C_{R}(G)$.
However, there is a path between $C$ and $C'$ in $C_{R}(G)$, and the intersection of any consecutive two cliques in the path properly contains $\{3\}$.

\begin{figure}[htb]
\centering
\includegraphics{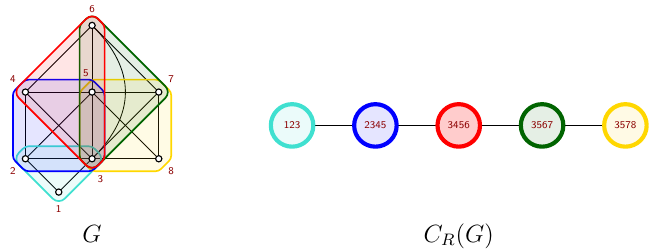}
\caption{A chordal graph and its reduced clique graph.}
\label{Fig3}
\end{figure}

\begin{proposition}
\label{clique-tree-edges}
Let $G$ be a connected chordal graph.
Let $T$ be a clique tree of $G$.
Assume that $C$ and $C'$ are maximal cliques of $G$ that are adjacent in $T$.
Then $C$ and $C'$ are adjacent in $C_{R}(G)$.
\end{proposition}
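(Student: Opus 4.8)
The plan is to exploit the defining property of a clique tree: for each vertex $v$, the maximal cliques containing $v$ induce a subtree of $T$. Deleting the edge $CC'$ from $T$ splits it into two subtrees; let $T_{C}$ be the one containing $C$ and $T_{C'}$ the one containing $C'$. Writing $A$ for the union of the cliques in $T_{C}$ and $B$ for the union of the cliques in $T_{C'}$, I would first establish the central claim that $A\cap B=C\cap C'$. The inclusion $C\cap C'\subseteq A\cap B$ is immediate. For the reverse inclusion, suppose $v\in A\cap B$. Then some clique in $T_{C}$ and some clique in $T_{C'}$ both contain $v$, so the subtree induced by the cliques containing $v$ meets both sides of the split; being connected, it must traverse the deleted edge $CC'$, forcing both $C$ and $C'$ to contain $v$. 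Hence $v\in C\cap C'$.

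Next I would record two consequences. Since every maximal clique is a node of $T$, each such clique lies wholly in $T_{C}$ or wholly in $T_{C'}$, so its vertex set is contained in $A$ or in $B$. As every edge of $G$ lies in some maximal clique, every edge has both endpoints in $A$ or both in $B$; in particular there is no edge of $G$ joining a vertex of $A-B$ to a vertex of $B-A$. This also yields $C\cap C'\ne\emptyset$: if it were empty then $A\cap B=\emptyset$, so $A$ and $B$ would partition $V(G)$ (as $A\cup B=V(G)$) into two non-empty parts with no edges between them, contradicting connectedness.

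Finally, set $S=C\cap C'=A\cap B$. Because $C-C'$ avoids $S$ it lies in $A-B$, and likewise $C'-C$ lies in $B-A$. Deleting $S$ from $G$ leaves the remaining vertices partitioned into $A-B$ and $B-A$ with no edges between them, so there is no $S$-avoiding path from a vertex of $C-C'$ to a vertex of $C'-C$. Thus $C$ and $C'$ form a non-disjoint separating pair and so are adjacent in $C_{R}(G)$.

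I expect the main obstacle to be the reverse inclusion in the claim $A\cap B=C\cap C'$: everything else is bookkeeping, but this step is where the induced-subtree property does the real work, and care is needed to argue that the connected subtree of cliques containing $v$ must cross the unique edge separating $T_{C}$ from $T_{C'}$.
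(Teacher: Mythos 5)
Your proof is correct, and it finishes in a genuinely more direct way than the paper's, although the opening move coincides. The paper's proof partitions the maximal cliques into two families $\mathcal{U}$ and $\mathcal{U}'$ determined by deleting the edge $CC'$ from $T$ --- exactly your $T_{C}$ and $T_{C'}$ --- and establishes, just as you do, that the union-intersection $U\cap U'$ is non-empty (via connectedness of $G$ and the fact that every edge of $G$ lies in some maximal clique) and that $U\cap U'\subseteq C\cap C'$ (via the induced-subtree property; this is your ``reverse inclusion''). The two arguments then diverge. The paper proceeds by contradiction: assuming $C$ and $C'$ are not adjacent in $C_{R}(G)$, it invokes Proposition~\ref{CRG-path} to obtain a path of $C_{R}(G)$ from $C$ to $C'$ whose consecutive intersections properly contain $S=C\cap C'$, finds an edge of that path joining a clique of $\mathcal{U}$ to a clique of $\mathcal{U}'$, and extracts a vertex of $(D\cap D')-S$ that the clique-tree property then forces into $S$. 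You instead note that every edge of $G$ lies wholly inside $A$ or wholly inside $B$, so that $S=A\cap B$ already separates $C-C'\subseteq A-B$ from $C'-C\subseteq B-A$; this produces the separating pair outright, with no contradiction and no appeal to Proposition~\ref{CRG-path}. Your route is more elementary and self-contained, using nothing beyond the definition of a clique tree, whereas the paper's version reuses machinery (Propositions~\ref{clique-sequence} and~\ref{CRG-path}) that it needs anyway for Theorem~\ref{maintheorem2}. The step you flagged as the crux, $A\cap B\subseteq C\cap C'$, is handled correctly: the subtree of $T$ induced by the cliques containing $v$ is connected and meets both components of $T\backslash CC'$, so it must contain the edge $CC'$ and hence both of its end-nodes. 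One small point worth making explicit in a final write-up: the endpoints of a putative $S$-avoiding path lie in $C-C'$ and $C'-C$, hence outside $S$, so the entire path lives in $(A-B)\cup(B-A)$ and must use an edge crossing between the two parts, which you have shown cannot exist.
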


\begin{proof}
Assume $C$ and $C'$ are adjacent in $T$, but not in $C_{R}(G)$.
We partition the maximal cliques of $G$ as follows.
Let $\mathcal{U}$ be the set of maximal cliques of $G$ such that $D$ is in $\mathcal{U}$ if and only if the path of $T$ from $D$ to $C$ does not contain $C'$.
Similarly, define $\mathcal{U}'$ so that $D'$ is in $\mathcal{U}'$ if and only if the path of $T$ from $D'$ to $C'$ does not contain $C$.
Note that every maximal clique of $G$ is in exactly one of $\mathcal{U}$ or $\mathcal{U}'$, since $T$ is a tree.
Furthermore $C$ is in $\mathcal{U}$ and $C'$ is in $\mathcal{U}'$.
Let $U$ be the union of the cliques in $\mathcal{U}$, and let $U'$ be the union of the cliques in $\mathcal{U}'$.
Every vertex is in at least one maximal clique so $U\cup U'=V(G)$.
Note that $C\subseteq U$ and $C'\subseteq U'$, so neither $U$ nor $U'$ is empty.

If $U\cap U'=\emptyset$, then we choose $u\in U$ and $u'\in U'$ so that $u$ and $u'$ are adjacent in $G$.
(We are able to do so because $G$ is connected.)
The edge between $u$ and $u'$ is contained in a maximal clique.
If this maximal clique is in $\mathcal{U}$ then $u'$ is in $U\cap U'$, and if it is in $\mathcal{U}'$ then $u$ is in $U\cap U'$.
In either case we have a contradiction, so $U\cap U'\ne \emptyset$.

Choose an arbitrary vertex $v$ in $U\cap U'$.
Choose $D\in\mathcal{U}$ and $D'\in\mathcal{U}'$ such that $v$ is in $D\cap D'$.
Because $T$ is a clique tree, it follows that $v$ is contained in all the cliques belonging to the path of $T$ from $D$ to $D'$.
In particular, $v$ is contained in $C$ and $C'$.
Thus $U\cap U'\subseteq C\cap C'$ and $C\cap C'$ is non-empty.

Let $S$ be $C\cap C'$.
Since $C$ and $C'$ are not adjacent in $C_{R}(G)$, we can apply Proposition~\ref{CRG-path} and find a path $P$ of $C_{R}(G)$ from $C$ to $C'$, where the intersection of each pair of consecutive cliques in this path properly contains $S$.
Since $C$ is in $\mathcal{U}$ and $C'$ is in $\mathcal{U}'$, there is an edge of $P$ that joins a clique $D\in \mathcal{U}$ to a clique $D'\in\mathcal{U}'$.
Then $D\cap D'$ properly contains $S$, so we choose $v$ in $(D\cap D')-S$.
Again using the fact that $T$ is a clique tree, we see that the path of $T$ from $D$ to $D'$ consists of cliques that contain $v$.
In particular, $v$ is in $C\cap C' = S$, and we have a contradiction that completes the proof.
\end{proof}

It follows from Proposition~\ref{clique-tree-edges} that every clique tree of $G$ is a spanning tree of $C_{R}(G)$.

\begin{proposition}
\label{clique-tree-path}
Let $G$ be a connected chordal graph and let $\sigma$ be a legitimate weighting of $G$.
Let $T$ be a clique tree of $G$.
Let $C$ and $C'$ be maximal cliques of $G$ that are adjacent in $C(G)$ and let $P$ be the path of $T$ between $C$ and $C'$.
The weight of any edge in $P$ is at least $\sigma(C\cap C')$.
Moreover, if $C$ and $C'$ are adjacent in $C_{R}(G)$, then at least one edge in $P$ has weight equal to $\sigma(C\cap C')$.
\end{proposition}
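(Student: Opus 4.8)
The plan is to treat the two assertions separately, letting the clique-tree property carry most of the weight. Throughout write $S = C \cap C'$, which is non-empty since $C$ and $C'$ are adjacent in $C(G)$, and note that for any two consecutive cliques $D_i, D_{i+1}$ of $P$ the set $D_i \cap D_{i+1}$ is the intersection of two distinct maximal cliques and so lies in the domain of $\sigma$.

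For the first assertion I would begin by showing that every clique on $P$ contains $S$. Fix $v \in S$. Since $T$ is a clique tree, the maximal cliques containing $v$ induce a subtree of $T$; as both $C$ and $C'$ lie in this subtree, so does the whole path $P$ between them, and hence every clique of $P$ contains $v$. Letting $v$ range over $S$ shows that every clique of $P$ contains $S$, so the intersection $X$ of any two consecutive cliques of $P$ satisfies $X \supseteq S$. If $X = S$ the corresponding edge has weight $\sigma(S) = \sigma(C \cap C')$; if $X \supsetneq S$ then legitimacy of $\sigma$ gives $\sigma(X) > \sigma(S)$. Either way the weight is at least $\sigma(C \cap C')$, which proves the first claim and also records the useful dichotomy that an edge of $P$ has weight exactly $\sigma(S)$ precisely when its two cliques meet in $S$.

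For the second assertion, suppose for contradiction that $C$ and $C'$ are adjacent in $C_R(G)$ yet no edge of $P$ has weight $\sigma(S)$. By the dichotomy just noted, the intersection of each consecutive pair along $P$ then properly contains $S$. Write the cliques of $P$ as $C = D_0, D_1, \ldots, D_m = C'$ and, for each $i \in \{0, \ldots, m-1\}$, choose $w_i \in (D_i \cap D_{i+1}) - S$, which is possible by the strict containment. I would then verify that $w_0, w_1, \ldots, w_{m-1}$ is a walk from a vertex of $C - C'$ to a vertex of $C' - C$ avoiding $S$: consecutive vertices $w_i, w_{i+1}$ both lie in $D_{i+1}$ and are therefore equal or adjacent; the endpoint $w_0$ lies in $D_0 = C$ but not in $S$, hence in $C - C'$, and symmetrically $w_{m-1} \in C' - C$; and no $w_i$ lies in $S$.

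Extracting a simple path from this walk leaves all of its vertices among the $w_i$, so the result is an $S$\dash avoiding path from a vertex of $C - C'$ to a vertex of $C' - C$. This contradicts the hypothesis that $C$ and $C'$ form a separating pair, completing the argument. The main obstacle is precisely this construction: one must see that the ``excess'' vertices $w_i$, drawn from the successive edge intersections, chain together into a walk through the shared cliques $D_{i+1}$ and that its two ends fall on the correct sides $C - C'$ and $C' - C$. It is exactly the failure of every edge to meet in $S$ that furnishes the vertices needed to defeat the separating pair.
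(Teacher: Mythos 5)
Your proof is correct. The first assertion is handled essentially as in the paper: both arguments reduce to the observation that, by the clique-tree property, every clique on $P$ contains each vertex of $S = C\cap C'$ (the paper phrases this contrapositively, choosing $v\in S-(C_i\cap C_{i+1})$ when some edge is too light; you state it directly), after which legitimacy of $\sigma$ gives the bound. For the second assertion your route genuinely differs. The paper argues directly with the components of $G-S$: since $C-S$ and $C'-S$ lie in distinct components, some consecutive pair $D, D'$ on $P$ must cross between components, forcing $D\cap D'\subseteq S$ and hence, with the first part, $\sigma(D\cap D')=\sigma(S)$. You instead prove the contrapositive: if every consecutive intersection properly contained $S$, the excess vertices $w_i\in (D_i\cap D_{i+1})-S$ chain into an $S$\dash avoiding walk in $G-S$ from $C-C'$ to $C'-C$, defeating the separating pair. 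The two are dual views of the same fact --- your walk is exactly a witness that no crossing edge exists --- and your construction mirrors, in reverse, the technique of Proposition \ref{CRG-path}, where the paper converts a vertex path into a sequence of cliques; here you convert a clique path into a vertex walk. The paper's version is slightly shorter and produces the crossing edge explicitly; yours has the small bonus of isolating the clean dichotomy that an edge of $P$ has weight exactly $\sigma(S)$ precisely when its two cliques meet in $S$. All the details you would need to check (distinctness of $w_0$ and $w_{m-1}$, adjacency of $w_i$ and $w_{i+1}$ inside $D_{i+1}$, extraction of a simple path, and the degenerate case where $P$ is the single edge $CC'$) do go through.
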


\begin{proof}
Let $S$ be $C\cap C'$.
Let $P$ be the path of $T$ from $C$ to $C'$, and let the cliques in this path be $C_{0},C_{1},\ldots, C_{n}$, where $C_{0}=C$ and $C_{n}=C'$.
Note that $P$ is a path of $C_{R}(G)$ by Proposition~\ref{clique-tree-edges}.
Thus any two consecutive cliques in the path have a non-empty intersection.
Assume $\sigma(C_{i}\cap C_{i+1}) < \sigma (S)$ for some $i$.
If $S$ were a subset of $C_{i}\cap C_{i+1}$, then we would have $\sigma(S)\leq \sigma(C_{i}\cap C_{i+1})$ by the definition of a legitimate weighting, but this is not true.
Therefore we can choose $v$ to be a vertex in $S-(C_{i}\cap C_{i+1})$.
Now $v$ is a vertex of both $C$ and $C'$, but the path of $T$ between $C$ and $C'$ contains at least one maximal clique (either $C_{i}$ or $C_{i+1}$) that does not contain $v$.
This contradicts the fact that $T$ is a clique tree.
Therefore the weight of any edge in $P$ is at least equal to $\sigma(S)$.

Now assume that $C$ and $C'$ are adjacent in $C_{R}(G)$, so that they form a separating pair.
That is, there are distinct connected components of $G-S$ that contain, respectively, $C-S$ and $C'-S$.
There must be maximal cliques $D$ and $D'$ that are adjacent in $P$, where $D-S$ is in the same connected component of $G-S$ as $C-S$, and $D'-S$ is not in this connected component.
This means that $D\cap D'$ is contained in $S$.
Hence $\sigma(D\cap D')\leq \sigma(S)$.
The previous paragraph shows that $\sigma(D\cap D')\geq \sigma(S)$, so the result follows.
\end{proof}

The proof of the next result is a straightforward adaptation of a proof given by Blair and Peyton~\cite{BP93}*{Theorem 3.6}.

\begin{lemma}
\label{weighted-clique-graph}
Let $G$ be a connected chordal graph.
Let $\sigma$ be a legitimate weighting of $G$ and let $T$ be a spanning tree of $C(G)$.
Then $T$ is a clique tree of $G$ if and only if it is a maximum-weight spanning tree of $C(G)$.
\end{lemma}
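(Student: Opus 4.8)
The plan is to prove both directions through a single device: the standard exchange bijection between the edge sets of two spanning trees. Recall that for spanning trees $T_{1}$ and $T_{2}$ of a connected graph there is a bijection $\phi\colon E(T_{1})\to E(T_{2})$ with $\phi(e)=e$ whenever $e\in E(T_{1})\cap E(T_{2})$, and such that, for every edge $e\in E(T_{1})$ joining cliques $C$ and $C'$, the edge $\phi(e)$ lies on the path of $T_{2}$ between $C$ and $C'$ (equivalently, $T_{1}-e+\phi(e)$ is again a spanning tree). By Gavril's theorem \cite{Gavril} a clique tree $T_{c}$ of $G$ exists, and by Proposition \ref{clique-tree-edges} it is a spanning tree of $C_{R}(G)$, hence of $C(G)$. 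I would keep $T_{c}$ fixed as a reference throughout.

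For the forward direction (clique tree $\Rightarrow$ maximum weight), let $T^{*}$ be an arbitrary spanning tree of $C(G)$ and apply the exchange bijection $\phi\colon E(T^{*})\to E(T_{c})$. Each edge $e$ of $T^{*}$ joins cliques $C,C'$ with $C\cap C'\neq\emptyset$, so $C$ and $C'$ are adjacent in $C(G)$; since $\phi(e)$ lies on the path of $T_{c}$ between $C$ and $C'$, Proposition \ref{clique-tree-path} shows its weight is at least $\sigma(C\cap C')$, the weight of $e$. Summing over $E(T^{*})$ and reindexing by the bijection gives $w(T_{c})\geq w(T^{*})$. Hence every clique tree is a maximum-weight spanning tree; write $W^{*}$ for this common maximum value.

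For the reverse direction (maximum weight $\Rightarrow$ clique tree), which is the crux, let $T$ be a maximum-weight spanning tree, so that $w(T)=W^{*}=w(T_{c})$, and apply the exchange bijection $\phi\colon E(T)\to E(T_{c})$. As before each $\phi(e)$ has weight at least that of $e$, so $\sum_{e}w(\phi(e))\geq\sum_{e}w(e)$; since both sides equal $W^{*}$, every inequality is an equality, giving $\sigma(C\cap C')=\sigma(D\cap D')$ whenever $e$ joins $C,C'$ and $\phi(e)$ joins $D,D'$. I would then upgrade this equality of weights to an equality of sets. Because $D$ and $D'$ are cliques lying on the path of $T_{c}$ between $C$ and $C'$, the running-intersection property of the clique tree $T_{c}$ gives $C\cap C'\subseteq D$ and $C\cap C'\subseteq D'$, hence $C\cap C'\subseteq D\cap D'$; combined with $\sigma(C\cap C')=\sigma(D\cap D')$ and the strict monotonicity built into a legitimate weighting, this forces $C\cap C'=D\cap D'$. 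Thus $\phi$ is a bijection that preserves the intersection set attached to each edge.

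It remains to run the Blair--Peyton counting argument, now justified for an arbitrary legitimate weighting. Let $\mu_{v}$ denote the number of maximal cliques containing a vertex $v$, and for a spanning tree $U$ of $C(G)$ let $a_{v}(U)$ be the number of edges of $U$ both of whose endpoints contain $v$. These edges form a forest on the $\mu_{v}$ cliques containing $v$, so $a_{v}(U)\leq \mu_{v}-1$, with equality precisely when those cliques are connected in $U$; consequently $U$ is a clique tree if and only if $a_{v}(U)=\mu_{v}-1$ for all $v$, and $\sum_{v}a_{v}(U)=\sum_{CC'\in E(U)}|C\cap C'|$. Since $\phi$ preserves each intersection set and hence its cardinality, $\sum_{v}a_{v}(T)=\sum_{e}|C\cap C'|=\sum_{e}|D\cap D'|=\sum_{v}a_{v}(T_{c})=\sum_{v}(\mu_{v}-1)$. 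As $a_{v}(T)\leq \mu_{v}-1$ term by term while the totals agree, equality holds for every $v$, so $T$ is a clique tree. I expect the main obstacle to be exactly the upgrade from equal weights to equal intersection sets in the previous paragraph: this is where a general legitimate weighting must be treated differently from the cardinality weighting used by Blair and Peyton, and it depends on pairing $T$ against a clique tree so that Proposition \ref{clique-tree-path} and the running-intersection property deliver the containment $C\cap C'\subseteq D\cap D'$ before monotonicity closes the gap; a secondary point requiring care is the precise fundamental-path form of the exchange bijection.
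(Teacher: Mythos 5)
Your proposal is correct, but it reaches the conclusion by a genuinely different route from the paper. Both proofs share the same pivotal mechanism for handling a general legitimate weighting: pair the maximum-weight tree $T$ against a clique tree, use Proposition \ref{clique-tree-path} to get $\sigma(C\cap C')\leq\sigma(D\cap D')$ for a suitable clique-tree edge $DD'$ on the path between $C$ and $C'$, then use the running-intersection property plus the strict monotonicity of $\sigma$ to upgrade equal weights to the set equality $C\cap C'=D\cap D'$ --- you identified precisely the step where the generalisation beyond cardinality weights lives. After that point the arguments diverge. The paper makes an extremal choice (a clique tree $T'$ sharing as many edges as possible with $T$), performs a \emph{single} swap producing spanning trees $(T-e)\cup f$ and $(T'-f)\cup e$, and derives a contradiction by showing $(T'-f)\cup e$ is itself a clique tree; this is entirely elementary, using only fundamental cut/path facts about spanning trees. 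You instead invoke a \emph{global} exchange bijection $\phi\colon E(T)\to E(T_{c})$ fixing common edges with $\phi(e)$ on the $T_{c}$-path of $e$'s endpoints, conclude that $\phi$ preserves intersection sets edgewise, and then close with the Blair--Peyton count $\sum_{v}a_{v}(U)=\sum_{CC'\in E(U)}|C\cap C'|\leq\sum_{v}(\mu_{v}-1)$, with equality exactly for clique trees. Your route buys a one-shot conclusion with no extremal induction, and as a byproduct shows that every maximum-weight spanning tree has the same multiset of edge intersections as a clique tree; the cost is that it rests on a non-elementary ingredient, the basis-exchange bijection, which should be justified (it follows from Hall's theorem applied to fundamental circuits, or from Brualdi's exchange theorem for matroid bases) rather than merely asserted.

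Two small corrections. First, your parenthetical equivalence is misstated: ``$\phi(e)$ lies on the path of $T_{2}$ between the endpoints of $e$'' is equivalent to $(T_{2}-\phi(e))+e$ being a spanning tree, not to $(T_{1}-e)+\phi(e)$ being one; since your argument only ever uses the path condition and bijectivity, this slip is harmless, but as written it conflates the two fundamental exchange directions. Second, in the forward direction the bijection is more machinery than needed: a clique tree is maximum-weight already by the standard criterion that no non-tree edge of $C(G)$ outweighs any edge on its fundamental path, which is exactly what Proposition \ref{clique-tree-path} supplies (this is how the paper argues). Neither point affects the validity of your proof.
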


\begin{proof}
If $T$ is a clique tree, then for any pair of maximal cliques, $C$ and $C'$, such that $C$ and $C'$ are adjacent in $C(G)$, the weight of the edge between $C$ and $C'$ is no greater than the weight of any edge in the path of $T$ between $C$ and $C'$ (Proposition~\ref{clique-tree-path}).
It immediately follows that $T$ has maximum weight.

For the other direction, we assume that $T$ is a maximum-weight spanning tree.
Because every chordal graph has a clique tree, and any clique tree is a spanning tree of $C_{R}(G)$ (and hence of $C(G)$), we can choose a clique tree $T'$ so that $T$ and $T'$ have as many edges in common as possible.
We can choose an edge in $T$ that is not in $T'$, because otherwise there is nothing left for us to prove.
So let $e$ be such an edge, and assume that $e$ joins maximal cliques $C$ and $C'$.
There are two connected components of $T\backslash e$, one containing $C$ and the other containing $C'$.
Let $P$ be the path of $T'$ from $C$ to $C'$.
We let $f$ be an edge of $P$ which joins two cliques that are not in the same component of $T\backslash e$.
Note that $f$ is an edge of $T'$, and hence an edge of $C(G)$.

If $(T-e)\cup f$ is not a spanning tree of $C(G)$, then there is a path of $T$ between the end-vertices of $f$ that does not use $e$.
But the end-vertices of $f$ are in different connected components of $T\backslash e$, so $(T-e)\cup f$ is indeed a spanning tree.
Similarly, if $(T'-f)\cup e$ is not a spanning tree, then there is a path of $T'$ between $C$ and $C'$ that does not contain $f$.
But $P$ is the unique path of $T'$ between $C$ and $C'$, and $f$ is an edge of $P$.
So $(T-e)\cup f$ and $(T'-f)\cup e$ are both spanning trees of $C(G)$.

Applying Proposition~\ref{clique-tree-path} to the clique tree $T'$ shows that the weight of $f$ is at least the weight of $e$.
Since $T$ is a maximum-weight spanning tree, and $(T-e)\cup f$ is a spanning tree it follows that the weights on $e$ and $f$ must be equal.
Let $D$ and $D'$ be the maximal cliques joined by $f$.
Any element that is in both $C$ and $C'$ must be in all the cliques in $P$, since $T'$ is a clique tree.
This shows that $C\cap C'\subseteq D\cap D'$.
If $C\cap C'$ were a proper subset of $D\cap D'$, then the definition of a legitimate weighting would mean that the weight of $e$ is strictly less than the weight of $f$, which is not true.
Therefore $C\cap C'=D\cap D'$.

We note that $(T'-f)\cup e$ cannot be a clique tree, since it has one more edge in common with $T$ than $T'$ does.
Therefore we choose a vertex $v\in V(G)$ so that the maximal cliques containing $v$ do not induce a subtree of $(T'-f)\cup e$.
Let $T''$ be the subtree of $T'$ induced by the maximal cliques containing $v$.
Then $f$ is in $T''$, or else $T''$ would be a subtree of $(T'-f)\cup e$.
This means that $v$ is in $D\cap D' = C\cap C'$.
So both $C$ and $C'$ are in $T''$, but they are not in the same component of $T''\backslash f$, because in that case $(T'-f)\cup e$ would contain a cycle.
So $e$ joins two vertices of $T''$ that are in different components of $T''\backslash f$.
Thus $(T''-f)\cup e$ is a subtree of $(T'-f)\cup e$, and we have a contradiction that completes the proof.
\end{proof}

\begin{proof}[Proof of \textup{Theorem~\ref{maintheorem2}}]
We have already noted that every clique tree is a spanning tree of $C_{R}(G)$.
Let $T$ be a clique tree of $G$.
Then $T$ is a maximum-weight spanning tree of $C(G)$ by Lemma~\ref{weighted-clique-graph}.
But every edge of $T$ is an edge of $C_{R}(G)$, by Proposition~\ref{clique-tree-edges}.
Since $C_{R}(G)$ is a subgraph of $C(G)$ it follows that $T$ is a maximum-weight spanning tree of $C_{R}(G)$.

For the other direction, we let $T$ be a maximum-weight spanning tree of $C_{R}(G)$.
We claim that $T$ is also a maximum-weight spanning tree of $C(G)$.
To prove this claim, let $e$ be an arbitrary edge of $C(G)$ that is not in $T$, let $C$ and $C'$ be the maximal cliques of $G$ joined by $e$, and let $P$ be the path of $T$ that joins $C$ and $C'$.
If $e$ is an edge of $C_{R}(G)$, then the weight of $e$ is no greater than the weight of any edge in $P$, since $T$ is a maximum-weight spanning tree of $C_{R}(G)$.
Therefore we assume that $e$ is not an edge of $C_{R}(G)$.
Now it follows from Proposition~\ref{CRG-path} and the definition of a legitimate weighting that there is a path from $C$ to $C'$ such that the edges in this path all have weight strictly greater than the weight of $e$.
From considering, say, Kruskal's algorithm, we now see that the edges in $P$ all have weight strictly greater than the weight of $e$.
In either case, the weight of $e$ does not exceed the weight of any edge in $P$.
This implies that $T$ is indeed a maximum-weight spanning tree of $C(G)$, and thus $T$ is a clique tree of $G$ by Lemma~\ref{weighted-clique-graph}.

To complete the proof, we let $e$ be an arbitrary edge of $C_{R}(G)$.
We will prove that $e$ is in a maximum-weight spanning tree of $C_{R}(G)$.
We let $C$ and $C'$ be the maximal cliques joined by $e$.
Let $T$ be an arbitrary maximum-weight spanning tree of $C_{R}(G)$, so that $T$ is a clique tree by the previous paragraph.
If $e$ is in $T$ then we have nothing left to prove, so assume that $P$ is the path of $T$ joining $C$ to $C'$, where $P$ contains more than one edge.
Proposition~\ref{clique-tree-path} shows that $P$ contains an edge, $f$, with weight equal to the weight of $e$.
Now $(T-f)\cup e$ is a maximum-weight spanning tree of $C_{R}(G)$ that contains $e$, and we are done.
\end{proof}

From the previous arguments we can deduce further additional facts, both noted in~\cite{GHP95}: any edge that is in $C(G)$ but not $C_{R}(G)$ cannot be in any maximum-weight spanning tree of $C(G)$.
Secondly, $C_{R}(G)$ is in fact the union of all clique trees of $G$.

Although the next fact is incidental to our main results here, we note it for a future application in~\cite{MR4642470}.

\begin{proposition}
\label{maxtree-leaf}
Let $G$ be a connected chordal graph, and let $T$ be a clique tree of $G$.
Let $C$ and $C'$ be adjacent in $T$ and let $S$ be $C\cap C'$.
Assume that $D$ and $D'$ are maximal cliques of $G$ and  the path of $T$ from $D$ to $D'$ contains both $C$ and $C'$.
Then $D-S$ and $D'-S$ are in different connected components of $G-S$.
\end{proposition}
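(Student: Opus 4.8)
The plan is to reuse the partition of the maximal cliques introduced in the proof of Proposition \ref{clique-tree-edges}. Since $C$ and $C'$ are adjacent in $T$, deleting the edge $CC'$ splits $T$ into two subtrees; let $\mathcal{U}$ be the set of maximal cliques in the subtree containing $C$ and let $\mathcal{U}'$ be the set in the subtree containing $C'$. Equivalently, $\mathcal{U}$ consists of those cliques whose path of $T$ to $C$ avoids $C'$, and $\mathcal{U}'$ of those whose path to $C'$ avoids $C$. Write $U$ and $U'$ for the unions of the cliques in $\mathcal{U}$ and $\mathcal{U}'$ respectively. The first observation I would record is that, because the path of $T$ from $D$ to $D'$ contains both $C$ and $C'$, it must traverse the edge $CC'$; hence $D$ and $D'$ lie in different subtrees of $T\setminus CC'$, and after relabelling we may assume $D\in\mathcal{U}$ and $D'\in\mathcal{U}'$, so that $D\subseteq U$ and $D'\subseteq U'$.

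Next I would show that $U\cap U'=S$. The inclusion $S\subseteq U\cap U'$ is immediate, since $S\subseteq C\subseteq U$ and $S\subseteq C'\subseteq U'$. For the reverse inclusion, suppose $v\in U\cap U'$; then $v$ lies in some clique of $\mathcal{U}$ and in some clique of $\mathcal{U}'$. As $T$ is a clique tree, the cliques containing $v$ induce a subtree of $T$, and this subtree meets both sides of the edge $CC'$, so it must contain that edge and in particular both $C$ and $C'$. Hence $v\in C\cap C'=S$, giving $U\cap U'\subseteq S$. It follows that $U-S=U\setminus U'$ and $U'-S=U'\setminus U$ are disjoint, and since every vertex lies in some maximal clique we also have $(U\setminus U')\cup(U'\setminus U)=V(G)-S$.

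The crux is then to verify that no edge of $G$ joins a vertex of $U\setminus U'$ to a vertex of $U'\setminus U$; this is exactly the argument used in Proposition \ref{clique-tree-edges}. Indeed, any edge $uu'$ of $G$ lies in some maximal clique $F$, and $F$ belongs to precisely one of $\mathcal{U}$ or $\mathcal{U}'$: if $F\in\mathcal{U}$ then $F\subseteq U$ forces both endpoints into $U$, and if $F\in\mathcal{U}'$ then both endpoints lie in $U'$, so no edge can have one endpoint in $U\setminus U'$ and the other in $U'\setminus U$. Consequently, in $G-S$ the sets $U\setminus U'$ and $U'\setminus U$ partition the vertex set with no edges between them, so every connected component of $G-S$ lies entirely in one part or the other. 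Finally I would note that $D-S$ and $D'-S$ are non-empty, since a maximal clique contained in $S\subseteq C\cap C'$ would be contained in both $C$ and $C'$ and hence equal to each, contrary to $C\ne C'$; combined with $D-S\subseteq U\setminus U'$ and $D'-S\subseteq U'\setminus U$, this places $D-S$ and $D'-S$ in different components of $G-S$. The point I expect to matter most is recognising that one does \emph{not} need $U-S$ to be connected in $G-S$: it suffices that no edge crosses between the two parts, which already forces $D-S$ and $D'-S$ into distinct components; the identification $U\cap U'=S$ and the non-emptiness of $D-S$ and $D'-S$ are the only other points requiring care, and both transcribe directly from the earlier propositions.
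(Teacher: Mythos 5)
Your proof is correct, but it takes a genuinely different route from the paper's. The paper argues by an extremal counterexample: it assumes $D \in \mathcal{U}$ and $D' \in \mathcal{U}'$ have $D-S$ and $D'-S$ in a common component $H$ of $G-S$, with $(D,D')$ chosen so that $|D \cap D'|$ is maximum; it shows $D \cap D' \subseteq S$, takes a shortest ($S$\dash avoiding) path of $H$ from $D-S$ to $D'-S$, and applies Proposition \ref{clique-sequence} to manufacture a chain of maximal cliques $D = D_{0}, D_{1}, \ldots, D_{k+1} = D'$ with each $D_{i}-S$ inside $H$; some consecutive pair straddles $\mathcal{U}$ and $\mathcal{U}'$ and has intersection strictly larger than $D \cap D'$, contradicting maximality. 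You instead prove directly that $S$ separates the two sides: you establish $U \cap U' = S$ by the subtree argument (verbatim from the proof of Proposition \ref{clique-tree-edges}), and you rule out edges between $U \setminus U'$ and $U' \setminus U$ because the maximal clique covering any edge lies wholly in $U$ or wholly in $U'$ --- the same trick the paper uses in Proposition \ref{clique-tree-edges} to show $U \cap U' \neq \emptyset$, here used contrapositively. Every step checks out, including the points that actually need care: the hypothesis forces the tree path from $D$ to $D'$ through the edge $CC'$, so $D$ and $D'$ lie on opposite sides (and the conclusion is symmetric, so your relabelling is harmless); $D-S$ and $D'-S$ are non-empty by maximality of $D$ and $D'$ as cliques; and each of $D-S$ and $D'-S$, being a clique of $G-S$, lies inside a single component --- a one-line remark you might state explicitly rather than leave implicit. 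Comparing the two: your argument is shorter, avoids Proposition \ref{clique-sequence} entirely, and in fact proves the stronger classical statement that for every edge $CC'$ of a clique tree, $S = C \cap C'$ separates $U - S$ from $U' - S$ in $G$ (a standard fact in the clique-tree literature; compare Blair and Peyton \cite{BP93}), from which Proposition \ref{maxtree-leaf} is an immediate corollary; the paper's extremal proof is longer but stays within the path-and-clique-sequence machinery it develops for Propositions \ref{clique-sequence} and \ref{CRG-path}, keeping the paper's toolkit uniform.
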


\begin{proof}
Let $\mathcal{U}$ be the family of maximal cliques of $G$ such that $D$ is in $\mathcal{U}$ if and only if the path of $T$ from $D$ to $C$ does not contain $C'$.
Similarly, we let $\mathcal{U}'$ be the family of maximal cliques where $D'$ is in $\mathcal{U'}$ if and only if the path of $T$ from $D'$ to $C'$ does not contain $C$.
Note that every maximal clique of $G$ belongs to exactly one of $\mathcal{U}$ and $\mathcal{U}'$.
We are asserting that if $D\in\mathcal{U}$ and $D'\in\mathcal{U}'$, then $D-S$ and $D'-S$ are in different connected components of $G-S$.
Assume that this fails for $D$ and $D'$, where $D\cap D'$ is as large as possible.
Let $H$ be the connected component of $G-S$ that contains both $D-S$ and $D'-S$.

Let $P$ be the path of $T$ from $D$ to $D'$.
Therefore $P$ contains both $C$ and $C'$.
Let $v$ be an arbitrary vertex of $D\cap D'$.
Then $v$ is in every maximal clique that appears in $P$, since $T$ is a clique tree.
In particular, $v$ is in $C$ and $C'$.
Thus $v$ is in $S$, and this shows that $D\cap D'$ is contained in $S$.

Let $v_{0},v_{1},\ldots, v_{k}$ be the vertex sequence of a shortest-possible path of $H$ from a vertex $v_{0}\in D-S$ to a vertex $v_{k}\in D'-S$.
This is an $S$\dash avoiding path, where $S$ contains $D\cap D'$.
Thus we can apply Proposition~\ref{clique-sequence}.
For $i=1,2,\ldots, k$ we let $D_{i}$ be a maximal clique of $G$ that contains $(D\cap D')\cup\{v_{i-1},v_{i}\}$.
Let $D_{0}$ be $D$ and let $D_{k+1}$ be $D'$.
Note that each $D_{i}-S$ is contained in $H$.
This is true for $D_{0}$ and $D_{k+1}$ by definition, and every other $D_{i}$ contains the edge $v_{i-1}v_{i}$, which is in the path of $H$ from $v_{0}$ to $v_{k}$.
Since $D_{0}$ is in $\mathcal{U}$ and $D_{k+1}$ is in $\mathcal{U}'$, we can choose $i$ so that $D_{i}$ is in $\mathcal{U}$ and $D_{i+1}$ is in $\mathcal{U}'$.
The intersection of $D_{i}$ and $D_{i+1}$ is larger than $D\cap D'$, since it contains $(D\cap D')\cup v_{i}$.
As $D_{i}-S$ and $D_{i+1}-S$ are both contained in $H$ we have a contradiction to the choice of $D$ and $D'$.
\end{proof}

\section{The structure of reduced clique graphs}

Habib and Stacho comment on the possibility of investigating the structure of graphs that are isomorphic to reduced clique graphs~\cite{HS12}*{p.~714}.
In this section we make a contribution to this investigation.
We start by answering an obvious question that requires a non-trivial proof.

\begin{corollary}
\label{connected}
Let $G$ be a chordal graph.
Then $C_{R}(G)$ is connected if and only if $G$ is connected.
\end{corollary}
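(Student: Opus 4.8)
The plan is to prove the two implications separately, treating the reverse direction by its contrapositive. The substantive direction is ``$G$ connected $\Rightarrow C_{R}(G)$ connected'', but almost all of its content has already been established, so the argument will be short.

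For the direction ``$G$ connected $\Rightarrow C_{R}(G)$ connected'', I would invoke Gavril's theorem, which guarantees that the chordal graph $G$ has a clique tree $T$. Since $G$ is connected, the remark immediately following Proposition \ref{clique-tree-edges} tells us that $T$ is a spanning tree of $C_{R}(G)$. A graph that contains a spanning subtree is connected, so $C_{R}(G)$ is connected. This is presumably the ``non-trivial'' part alluded to in the text, since all the real work is hidden inside Propositions \ref{clique-sequence}--\ref{clique-tree-edges}: once those are in hand, the implication is immediate.

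For the reverse direction I would argue the contrapositive: if $G$ is disconnected, then $C_{R}(G)$ is disconnected. Suppose $G$ has at least two connected components. Each maximal clique of $G$ is itself connected, so it lies entirely within a single component. If $C$ and $C'$ are maximal cliques in different components of $G$, then $C\cap C'=\emptyset$, so $C$ and $C'$ are non-adjacent in $C(G)$ and hence in $C_{R}(G)$, because every edge of $C_{R}(G)$ is an edge of $C(G)$. More generally, consecutive cliques along any path of $C_{R}(G)$ have non-empty intersection and therefore lie in a common component of $G$, so by induction the two endpoints of any such path lie in the same component. Since each component of $G$ contains at least one maximal clique, two cliques belonging to distinct components cannot be joined by a path of $C_{R}(G)$, and thus $C_{R}(G)$ is disconnected.

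The only possible obstacle is that the forward direction leans entirely on the machinery developed earlier (the existence of clique trees for chordal graphs together with their embedding as spanning trees of $C_{R}(G)$); there is no additional combinatorial difficulty to overcome here. The reverse direction is routine, needing only that maximal cliques respect the components of $G$ and that $C_{R}(G)$ is a subgraph of $C(G)$. The component argument also dispatches the degenerate cases uniformly, once we note that every non-empty component contributes a maximal clique.
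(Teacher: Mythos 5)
Your proposal is correct and matches the paper's proof essentially step for step: the forward direction via Gavril's theorem plus the fact (from Proposition \ref{clique-tree-edges}) that a clique tree is a spanning tree of $C_{R}(G)$, and the reverse direction via the observation that maximal cliques in distinct components of $G$ are disjoint, so no path of $C_{R}(G)$ can join them. Your added induction along paths just makes explicit what the paper leaves implicit.
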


\begin{proof}
Assume that $H$ and $H'$ are distinct connected components of $G$.
No maximal clique of $H$ can share a vertex with a maximal clique of $H'$.
It follows that there be no path of $C_{R}(G)$ that joins two such cliques.
Thus $C_{R}(G)$ is not connected.

The other direction is stated without proof in~\cite{HS12}*{p.~716}.
Assume that $G$ is connected.
Since $G$ is chordal it has a clique tree~\cite{Gavril}*{Theorem 2}, and Proposition~\ref{clique-tree-edges} shows that every edge of the clique tree is an edge of $C_{R}(G)$.
Thus $C_{R}(G)$ has a spanning tree, so it is connected.
\end{proof}

Next we note a characterisation of clique graphs due to Szwarcfiter and Bornstein.

\begin{theorem}[\cite{SB94}*{Theorem 2.1}]
\label{SB}
The graph $H$ is isomorphic to $C(G)$ for some connected chordal graph $G$ if and only if $H$ has a spanning tree $T$ such that whenever $u$ and $v$ are adjacent in $H$, the path of $T$ from $u$ to $v$ induces a clique of $H$.
\end{theorem}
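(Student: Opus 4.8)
The plan is to prove both directions of the equivalence, of which the forward one is routine and the reverse one carries the real content. For the forward direction I would assume $H\cong C(G)$ for a connected chordal graph $G$ and identify the vertices of $H$ with the maximal cliques of $G$. Gavril's theorem \cite{Gavril} supplies a clique tree $T$ of $G$, and Proposition \ref{clique-tree-edges} shows that every edge of $T$ lies in $C_{R}(G)$, hence in $C(G)=H$, so $T$ is a spanning tree of $H$. To check the path condition, suppose $u$ and $v$ are adjacent in $H$, so that they are maximal cliques $C$ and $C'$ with $C\cap C'\ne\emptyset$, and fix a vertex $x\in C\cap C'$. The clique-tree property forces the cliques containing $x$ to induce a subtree of $T$ that contains $C$ and $C'$, and therefore the entire $T$\dash path between them; every clique on this path then contains $x$, so these cliques pairwise intersect and induce a clique of $H$.

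For the reverse direction I would build $G$ directly from $H$ and $T$. Introduce one vertex $x_{e}$ of $G$ for each edge $e$ of $H$ and one \emph{private} vertex $y_{w}$ for each $w\in V(H)$, and for every $w$ set $B_{w}=\{y_{w}\}\cup\{x_{e}\colon w \text{ lies on the } T\text{-path joining the ends of } e\}$; let $G$ be the graph on all these vertices in which two vertices are adjacent precisely when some $B_{w}$ contains both. Each $B_{w}$ is then a clique of $G$, it is non-empty and inclusion-maximal because any vertex adjacent to $y_{w}$ must lie in the only bag containing $y_{w}$, and $w\mapsto B_{w}$ is injective for the same reason. The crucial point, where the hypothesis on $T$ enters, is the equivalence $B_{w}\cap B_{w'}\ne\emptyset\Longleftrightarrow w\sim_{H}w'$: if $x_{e}\in B_{w}\cap B_{w'}$ then $w$ and $w'$ both lie on the $T$\dash path across the edge $e$, which by hypothesis induces a clique of $H$, so $w\sim_{H}w'$; the converse follows by taking $e=\{w,w'\}$.

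The main obstacle is verifying that the sets $B_{w}$ are exactly the maximal cliques of $G$ and that $G$ is chordal. Given any clique $K$ of $G$, if $K$ contains a private vertex $y_{w}$ then $K\subseteq B_{w}$; otherwise the edges $e$ with $x_{e}\in K$ determine $T$\dash paths that pairwise intersect (their vertices $x_{e}$ share bags), so the Helly property for subtrees of a tree yields a common node $w^{*}$ with $K\subseteq B_{w^{*}}$. Hence every maximal clique is some $B_{w}$. Since the bags containing any fixed vertex of $G$ form a subtree of $T$ — a path for each $x_{e}$ and a single node for each $y_{w}$ — the tree $T$ is a clique tree of $G$, so $G$ is chordal by Gavril's theorem, and it is connected because adjacent nodes of $T$ are adjacent in $H$ and hence have overlapping bags. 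Finally $w\mapsto B_{w}$ is an adjacency-preserving bijection from $V(H)$ onto the maximal cliques of $G$, which gives $H\cong C(G)$ and completes the argument.
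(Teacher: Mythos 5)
Your proof is correct, but note that the paper itself offers no proof of this statement: Theorem \ref{SB} is imported verbatim from Szwarcfiter and Bornstein \cite{SB94}*{Theorem 2.1}, so there is no internal argument to compare against. Your forward direction is the natural one and is properly grounded in the paper's own machinery: Gavril's theorem supplies a clique tree, Proposition \ref{clique-tree-edges} (which requires $G$ connected, as given) guarantees its edges lie in $C_{R}(G)\subseteq C(G)$, and the clique-tree property forces every clique on the $T$\dash path between two intersecting maximal cliques to contain a common vertex, hence to induce a clique of $H$. Your reverse direction is a clean reconstruction in the spirit of the Szwarcfiter--Bornstein argument, and every step checks out: the private vertices $y_{w}$ simultaneously force each bag $B_{w}$ to be an inclusion-maximal clique and make $w\mapsto B_{w}$ injective; the hypothesis on $T$ enters exactly where it must, in showing $B_{w}\cap B_{w'}\ne\emptyset$ implies $w\sim_{H}w'$ (since $w$ and $w'$ are distinct nodes on the $T$\dash path across $e$, which induces a clique); and the Helly property for subtrees disposes of cliques consisting solely of edge-vertices. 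One small streamlining you could have used: your $G$ is by construction the intersection graph of subtrees of $T$ (the path $T_{e}$ for each $x_{e}$ and the singleton $\{w\}$ for each $y_{w}$), so chordality of $G$ follows immediately from Gavril's characterisation of chordal graphs as intersection graphs of subtrees of a tree, making the explicit verification that $T$ is a clique tree of $G$ a bonus (it yields the maximal-clique structure you need anyway) rather than a necessity for chordality.
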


\subsection{Induced cycles}
Next we observe that clique graphs can have induced cycles of any length.
We will later show that this is not true for reduced clique graphs.
For an integer $n\geq 3$ the \emph{wheel graph} with $n$ spokes is obtained from a cycle of $n$ vertices by adding a new vertex and making it adjacent to all vertices of the cycle.
Thus the wheel graph with $n$ spokes has an induced cycle of $n$ vertices.

\begin{proposition}
\label{clique-wheels}
For every integer $n\geq 3$ the wheel graph with $n$ spokes is isomorphic to the clique graph of a chordal graph.
\end{proposition}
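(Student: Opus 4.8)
The plan is to apply the Szwarcfiter--Bornstein characterisation, Theorem~\ref{SB}, rather than build a chordal graph by hand. Write $W_n$ for the wheel graph with $n$ spokes, with hub $h$ and cycle $v_1,\dots,v_n$. I would take $T$ to be the spanning star of $W_n$, that is, the tree whose edges are exactly the $n$ spokes $hv_1,\dots,hv_n$. To verify the hypothesis of Theorem~\ref{SB} I must check that for every edge $uv$ of $W_n$ the $T$-path from $u$ to $v$ induces a clique. There are only two kinds of edges. If $uv$ is a spoke then the $T$-path is the single edge $uv$, which is a clique. If $uv$ is a cycle edge $v_iv_{i+1}$ then the $T$-path is $v_i,h,v_{i+1}$; since $h$ is adjacent to both $v_i$ and $v_{i+1}$ and $v_iv_{i+1}$ is itself an edge, these three vertices form a triangle, hence a clique. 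Thus $T$ satisfies the condition, and Theorem~\ref{SB} guarantees the existence of a connected chordal graph $G$ with $C(G)\cong W_n$.

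Since the reader may want $G$ exhibited explicitly, I would also describe such a $G$ directly. Take vertices $w_1,\dots,w_n$ forming a clique $K_n$, and for each $i$ adjoin a new vertex $p_i$ adjacent to exactly $w_{i-1}$ and $w_i$ (indices read cyclically, so $w_0=w_n$); geometrically this is $K_n$ with a pendant triangle hung on each edge of one Hamiltonian cycle. The key structural claims to verify are that $G$ is chordal and that its maximal cliques are precisely $M_0=\{w_1,\dots,w_n\}$ together with $M_i=\{w_{i-1},w_i,p_i\}$ for $1\le i\le n$. Both follow from the observation that the $p_i$ are pairwise non-adjacent and each $p_i$ is simplicial with the adjacent pair $w_{i-1},w_i$ as its neighbourhood: any clique contains at most one $p_i$ and so lies inside some $M_i$ or inside $M_0$, and any cycle through a $p_i$ is immediately chorded by the edge $w_{i-1}w_i$.

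Finally I would read off $C(G)$. The clique $M_0$ meets every $M_i$ in $\{w_{i-1},w_i\}$, so $M_0$ is adjacent to all the others; consecutive cliques $M_i$ and $M_{i+1}$ meet in $w_i$; and for non-consecutive $i,j$ the pairs $\{w_{i-1},w_i\}$ and $\{w_{j-1},w_j\}$ are disjoint while $p_i\ne p_j$, so $M_i\cap M_j=\emptyset$. Hence $M_0$ is a hub adjacent to the $n$-cycle $M_1,\dots,M_n$, which is exactly $W_n$.

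I expect the only real obstacle to be chordality in the explicit construction. The naive attempt---introducing separate private vertices to realise each cycle-overlap $M_i\cap M_{i+1}$---creates an induced $n$-cycle among those vertices and so fails for $n\ge 4$. The fix, forced anyway by the clique-tree structure underlying Theorem~\ref{SB}, is to route every cycle-overlap through the hub by placing all the shared vertices $w_i$ inside the single clique $M_0$; one should also note that $n\ge 3$ is needed for $M_0$ to be maximal and for the cliques $M_0,\dots,M_n$ to be pairwise distinct.
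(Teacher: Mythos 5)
Your proposal is correct, and in fact contains two complete proofs. Your direct construction is essentially the paper's own: the paper hangs a pendant triangle $\{v_i,u_i,u_{i+1}\}$ on each edge of a Hamiltonian cycle of a central clique, exactly as you do with the $p_i$ and $M_i$. The only difference is that the paper's hub clique has $n+1$ vertices $u_0,\dots,u_{n-1},x$, with an extra apex $x$, whereas you use the bare $K_n$; your verification shows the apex is dispensable, since each $p_i$ sees only two of the $w_j$ and so $M_0$ is already maximal once $n\geq 3$ (the point you rightly flag). Your first argument, via Theorem~\ref{SB} with the spanning star at the hub, is the route the paper explicitly alludes to (``this is easy to prove using Theorem~\ref{SB}'') but declines to carry out; your check that spoke paths and the length-two paths $v_i,h,v_{i+1}$ induce cliques is exactly what is needed, so you have supplied the omitted detail. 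The SB route is shorter but non-constructive; the explicit construction buys a concrete witness $G$, which is presumably why the paper prefers it. Your closing remark about why naive private overlap vertices would create an induced $n$-cycle is a sound sanity check, though not needed for the proof itself.
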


\begin{proof}
This is easy to prove using Theorem~\ref{SB}, but we will give a direct construction.
Start with a clique on the $n+1$ vertices $u_{0},u_{1},\ldots, u_{n-1},x$.
For each $i\in \mathbb{Z}/n\mathbb{Z}$, add a new vertex $v_{i}$ and make it adjacent to $u_{i}$ and $u_{i+1}$.
Call the resulting graph $G$.
It is easy to verify that $G$ is chordal, and its maximal cliques are $\{u_{0},u_{1},\ldots, u_{n-1},x\}$ along with $\{v_{i},u_{i},u_{i+1}\}$ for each $i\in\mathbb{Z}/n\mathbb{Z}$.
The result follows.
\end{proof}

\begin{proposition}
\label{reduced-clique-wheels}
For every even integer $n\geq 4$ the wheel graph with $n$ spokes is isomorphic to the reduced clique graph of a chordal graph.
\end{proposition}

\begin{proof}
The example in Figure~\ref{Fig2} shows that this proposition is true when $n=4$.
Therefore we will assume that $n$ is an even integer exceeding $4$.
Let $k$ be $n/2$.
We start with a clique on $u_{0},u_{1},\ldots, u_{k-1},x$.
For each $i\in \mathbb{Z}/k\mathbb{Z}$, add a new vertex $v_{i}$ and make it adjacent to $u_{i}$ and $u_{i+1}$.
In addition, add a new vertex $z_{i}$ and make it adjacent to $u_{i}$.
Let $G$ be the resulting graph.
Certainly $G$ is chordal.
The maximal cliques of $G$ are $\{u_{0},u_{1},\ldots, u_{k-1},x\}$ along with $\{v_{i},u_{i},u_{i+1}\}$ and $\{u_{i},z_{i}\}$ for each $i\in\mathbb{Z}/k\mathbb{Z}$.
The fact that $k\geq 3$ means that $\{v_{i},u_{i},u_{i+1}\}$ and $\{v_{j},u_{j},u_{j+1}\}$ have at most one vertex in common when $i\ne j$.
We can easily verify that such a pair does not form a separating pair.
On the other hand, $\{u_{i},z_{i}\}$ forms a separating pair with both $\{v_{i},u_{i},u_{i+1}\}$ and $\{v_{i-1},u_{i-1},u_{i}\}$.
All maximal cliques form a separating pair with $\{u_{0},u_{1},\ldots, u_{k-1},x\}$.
In this way we can check that $C_{R}(G)$ is isomorphic to the wheel graph with $n$ spokes.
\end{proof}

If $G$ is the chordal graph $K_{1,4}$, then $G$ has four maximal cliques and $C_{R}(G)$ is the complete graph on four vertices (which is to say, the wheel with three spokes).
However, we see no way of constructing a reduced clique graph that is isomorphic to a wheel graph with a number of spokes that is odd and greater than three.
In fact, we are prepared to make the following, stronger, conjecture.

\begin{conjecture}
\label{odd-holes-conjecture}
Let $k>3$ be an odd integer.
There is no chordal graph $G$ such that $C_{R}(G)$ contains an induced cycle with exactly $k$ vertices.
\end{conjecture}

We prove the first case of this conjecture in the following work.

\begin{definition}
Let $G$ be a chordal graph.
Let $C_{0},C_{1},\ldots, C_{n-1}$ be a cyclic ordering of the maximal cliques in an induced cycle of $C_{R}(G)$.
We take the indices to be from $\mathbb{Z}/n\mathbb{Z}$, so $C_{i}$ and $C_{j}$ are adjacent in $C_{R}(G)$ if and only if $j\in \{i-1,i+1\}$.
If $|C_{i}\cap C_{i+1}|\leq |C_{j}\cap C_{j+1}|$ for every $j\in \mathbb{Z}/n\mathbb{Z}$, then we say that the edge between $C_{i}$ and $C_{i+1}$ is a \emph{minimal} edge of the cycle.
\end{definition}

\begin{lemma}
\label{holefree-lemma}
Let $G$ be a chordal graph.
Let $C_{0},C_{1},\ldots, C_{n-1}$ be a cyclic ordering of the maximal cliques in an induced cycle of $C_{R}(G)$, where $n\geq 4$ and the indices are from $\mathbb{Z}/n\mathbb{Z}$.
Assume that the edge between $C_{0}$ and $C_{1}$ is a minimal edge of the induced cycle.
Let $S$ be $C_{0}\cap C_{1}$ and for $i=0,1$ let $H_{i}$ be the connected component of $G-S$ that contains $C_{i}-S$.
Then $H_{0}$ and $H_{1}$ are distinct connected components and $C_{i}-S$ is contained in $H_{0}$ or $H_{1}$ for every $i\in \mathbb{Z}/n\mathbb{Z}$.
Furthermore, either:
\begin{enumerate}[label = \textup{(\roman*)}]
\item $H_{0}$ contains all of $C_{0}-S, C_{2}-S,\ldots, C_{n-1}-S$,
\item $H_{1}$ contains all of $C_{1}-S, C_{2}-S,\ldots, C_{n-1}-S$, or
\item $n=4$, and $H_{0}$ contains $C_{0}-S$ and $C_{2}-S$ while $H_{1}$ contains $C_{1}-S$ and $C_{3}-S$.
\end{enumerate}
\end{lemma}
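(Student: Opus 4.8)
The plan is to track, for each $i$, which connected component of $G-S$ contains the clique $C_i-S$, and to show the resulting pattern around the cycle is tightly constrained. First I would record two easy reductions. Since $C_0$ and $C_1$ are adjacent in $C_R(G)$ they form a separating pair, and as $C_0\cap C_1=S$ this says exactly that $C_0-S$ and $C_1-S$ lie in different components of $G-S$; hence $H_0\ne H_1$. Next, because each $C_i$ is a maximal clique distinct from both $C_0$ and $C_1$, maximality forces $C_i-S\ne\emptyset$, and since $C_i-S$ is a clique of $G-S$ it is connected and so lies in a single component, which I will denote $c(i)$ (thus $c(0)=H_0$ and $c(1)=H_1$).

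The heart of the argument is a clean dichotomy for the edges of the cycle. Because $C_0C_1$ is a minimal edge, $|C_i\cap C_{i+1}|\ge|S|$ for every $i$. I would show that either $C_i\cap C_{i+1}=S$, in which case $C_i,C_{i+1}$ is a separating pair and $c(i)\ne c(i+1)$ (a \emph{crossing} edge), or $(C_i\cap C_{i+1})\setminus S\ne\emptyset$, in which case a common vertex outside $S$ witnesses $c(i)=c(i+1)$. Since $c(0)\ne c(1)$ the cyclic sequence $c(0),\dots,c(n-1)$ is not constant, so its maximal monochromatic runs are bounded on both sides by crossing edges. A crucial consequence is that every crossing edge $C_iC_{i+1}$ satisfies $S\subseteq C_i$ and $S\subseteq C_{i+1}$, so both endpoints of every run are cliques that contain $S$.

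The key lemma I would then isolate is: if $S\subseteq C_i$, $S\subseteq C_j$ and $c(i)\ne c(j)$, then $C_i$ and $C_j$ are adjacent in $C_R(G)$. Indeed $C_i\cap C_j\supseteq S$, and any vertex of $C_i\cap C_j$ outside $S$ would lie in both $c(i)$ and $c(j)$, so $C_i\cap C_j=S$; then $C_i-S$ and $C_j-S$ lie in distinct components of $G-S$, making $C_i,C_j$ a separating pair. As the cycle is \emph{induced}, adjacency in $C_R(G)$ is the same as cyclic adjacency. Applying this to the $S$-containing cliques (which include every run endpoint, and hence represent every component that actually occurs), any two of them in different components must be cyclic neighbours. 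Since a vertex of a cycle has only two neighbours, three pairwise-distinct components would produce three pairwise cyclically adjacent indices, which is impossible for $n\ge 4$. This shows only $H_0$ and $H_1$ occur, giving the first assertion of the lemma.

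Finally I would deduce the trichotomy. Write $A=\{i:c(i)=H_0\}$ and $B=\{i:c(i)=H_1\}$, and let $A',B'$ be the indices in $A,B$ whose cliques contain $S$; note $0\in A'$ and $1\in B'$. The adjacency lemma forces every element of $A'$ to be a cyclic neighbour of every element of $B'$, and the two-neighbour constraint then yields $|A'|\le 1$ or $|B'|\le 1$, unless $n=4$ with $\{A',B'\}=\{\{0,2\},\{1,3\}\}$, which is case (iii). If $|A'|\le 1$ then $A'=\{0\}$; since every $H_0$-run has both endpoints in $A'$, there is a single such run, namely $\{C_0\}$, so all remaining cliques lie in $H_1$, which is case (ii), and symmetrically $|B'|\le 1$ gives case (i). I expect the main obstacle to be pinning down this last step rigorously: one must argue carefully that the endpoints of the maximal monochromatic runs contain $S$ and that these endpoints already witness every occurring component and every $A/B$ class, so that the two-neighbour restriction can be converted into the exact statements of cases (i)--(iii).
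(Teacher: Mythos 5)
Your proof is correct and takes essentially the same route as the paper's: your key lemma (two cliques containing $S$ whose differences lie in distinct components of $G-S$ have intersection exactly $S$ and hence are adjacent in $C_R(G)$) is the paper's Claim 1, your three-component triangle contradiction is its Claim 2, and your counting of $A'$ and $B'$ against the two-neighbour constraint reproduces the paper's four-clique $K_{2,2}$ argument that forces $n=4$ in case (iii). The only difference is packaging: the crossing-edge dichotomy and run-endpoint bookkeeping you introduce make explicit what the paper encodes in the hypotheses of Claim 1, namely that minimality of the edge $C_0C_1$ forces any clique of the cycle adjacent to one housed in a different component to contain $S$.
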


\begin{proof}
Note that because $C_{0},C_{1},\ldots, C_{n-1}$ are distinct maximal cliques of $G$, none of them is contained in $S$.
Thus $C_{i}-S$ is non-empty for all $i$ and is contained in $H_{i}$, a connected component of $G-S$.
Because $C_{0}$ and $C_{1}$ form a separating pair, $C_{0}-S$ and $C_{1}-S$ are contained in different connected components of $G-S$, so $H_{0}$ and $H_{1}$ are distinct components.

\begin{claim}
\label{hole-free-claim1}
Assume that $i$ and $j$ are distinct indices in $\mathbb{Z}/n\mathbb{Z}$ such that $H_{i}$ and $H_{j}$ are distinct, and furthermore, $C_{i}$ is adjacent in $C_{R}(G)$ to $C_{p}$, where $C_{p}-S$ is not contained in $H_{i}$ and $C_{j}$ is adjacent to $C_{q}$, where $C_{q}-S$ is not contained in $H_{j}$.
Then $C_{i}$ and $C_{j}$ are adjacent in $C_{R}(G)$.
\end{claim}

\begin{proof}
Note that because the cycle of $C_{R}(G)$ is induced, $p$ is in $\{i-1,i+1\}$ and $q$ is in $\{j-1,j+1\}$.
Note also that $C_{i}\cap C_{p}$ is contained in $S$.
If this containment is proper then $|C_{i}\cap C_{p}| < |S| = |C_{0}\cap C_{1}|$ and we have violated our assumption that the edge between $C_{0}$ and $C_{1}$ is minimal.
Therefore $C_{i}$ and $C_{p}$ both contain $S$.
The same argument shows $S\subseteq C_{j}\cap C_{q}$.
Now $C_{i}\cap C_{j}$ is equal to $S$.
Moreover $C_{i}-S$ and $C_{j}-S$ are in different components of $G-S$, so $C_{i}$ and $C_{j}$ form a separating pair of maximal cliques.
Hence they are adjacent in $C_{R}(G)$.
\end{proof}

We colour the cliques of $C_{0},C_{1},\ldots, C_{n-1}$ in the following way.
For each $i\in \mathbb{Z}/n\mathbb{Z}$, if $C_{i}-S$ is contained in $H_{0}$ we colour $C_{i}$ red, and if $C_{j}-S$ is in $H_{1}$ we colour $C_{i}$ blue.
Thus $C_{0}$ is red and $C_{1}$ is blue.

\begin{claim}
\label{hole-free-claim2}
Any maximal clique $C_{i}$ is either red or blue.
\end{claim}

\begin{proof}
If the claim fails then there is some $i\in \mathbb{Z}/n\mathbb{Z}-\{0,1\}$ such that $H_{i}$ is not equal to $H_{0}$ or $H_{1}$.
We colour any clique $C_{j}$ in $C_{0},C_{1},\ldots, C_{n-1}$ green if $C_{j}-S$ is contained in $H_{i}$.
We know that the collections of red, blue, and green cliques are all non-empty.
Therefore we can find a red clique, $C_{\text{red}}$, adjacent to a clique that is not red.
We can similarly find $C_{\text{blue}}$, a blue clique that is adjacent to a non-blue clique, and $C_{\text{green}}$, a green clique that is adjacent to a clique that is not green.
Now Claim~\ref{hole-free-claim1} implies that $C_{\text{red}}$, $C_{\text{blue}}$, and $C_{\text{green}}$ are adjacent to each other in $C_{R}(G)$.
As they are three distinct vertices in an induced cycle of $C_{R}(G)$ with at least four vertices, this is an immediate contradiction.
\end{proof}

If $C_{1}$ is the only blue clique, then statement (i) holds and we have nothing left to prove.
Similarly, if $C_{0}$ is the only red clique, then (ii) holds and we are done.
So we assume there are at least two red cliques and at least two blue cliques.
We can choose $C_{\text{red}}$ and $C_{\text{red}}'$ to be distinct red cliques that are adjacent to blue cliques, and we can choose $C_{\text{blue}}$ and $C_{\text{blue}}'$ to be two distinct blue cliques that are adjacent to red cliques.
Now Claim~\ref{hole-free-claim1} implies that $C_{\text{red}}$ and $C_{\text{red}}'$ are adjacent to both $C_{\text{blue}}$ and $C_{\text{blue}}'$.
Thus the four cliques induce a cycle in $C_{R}(G)$.
This is impossible if $n\geq 5$, so we conclude that $n=4$.
Now $C_{0}$ is a red clique and it is adjacent to two blue cliques.
Thus $C_{1}$ and $C_{3}$ are blue, $C_{2}$ is red, and we are finished.
\end{proof}

We next establish the first case of Conjecture~\ref{odd-holes-conjecture}.

\begin{lemma}
\label{no-five-holes}
There is no chordal graph $G$ such that $C_{R}(G)$ has an induced cycle with exactly five vertices.
\end{lemma}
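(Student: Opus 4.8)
The plan is to argue by contradiction. Suppose $C_0,C_1,\dots ,C_4$ is a cyclic ordering of the cliques in an induced five\dash cycle of $C_{R}(G)$, with indices in $\mathbb{Z}/5\mathbb{Z}$. First I would normalise the configuration. Choose a minimal edge and relabel so that it joins $C_0$ and $C_1$; set $S=C_0\cap C_1$. Since $n=5>4$, Lemma \ref{holefree-lemma} leaves only options (i) and (ii), and these are interchanged by swapping the roles of $C_0$ and $C_1$ (the reflection of the cycle that fixes the edge $\{C_0,C_1\}$, exchanges $C_2$ with $C_4$, and fixes $C_3$). So after possibly applying this relabelling I may assume (i) holds: writing $H_0$ and $H_1$ for the components of $G-S$ meeting $C_0-S$ and $C_1-S$ respectively, the set $C_i-S$ lies in $H_0$ for every $i\neq 1$, while $C_1-S\subseteq H_1$ and $H_0\neq H_1$.

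Next I would extract the structure forced at $C_1$. If for some $i\neq 1$ there were a vertex $x\in (C_1\cap C_i)-S$, then $x$ would lie in both $H_1$ and $H_0$, which is impossible; hence $C_1\cap C_i\subseteq S$ for every $i\neq 1$. In particular $C_1\cap C_2\subseteq S$, and as the edge $C_0C_1$ is minimal we also have $|C_1\cap C_2|\geq|S|$, so $C_1\cap C_2=S$. Thus both edges at $C_1$ carry intersection exactly $S$, and consequently $S\subseteq C_0\cap C_2$. The four cliques $C_0,C_2,C_3,C_4$ all lie, after deletion of $S$, in the single component $H_0$, and because the five\dash cycle is chordless they induce the path $C_2\,C_3\,C_4\,C_0$ in $C_{R}(G)$. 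I would also invoke the residual reflection that fixes $C_1$ (swapping $C_0\leftrightarrow C_2$ and $C_3\leftrightarrow C_4$, and preserving $S$) to assume without loss of generality that $|C_2\cap C_3|\leq|C_0\cap C_4|$.

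The heart of the proof is to show that this configuration is impossible. The strategy is to produce a chord: I would aim to prove that a suitable non\dash adjacent pair of the cycle — by symmetry, an end clique together with the opposite interior clique, say $C_0$ and $C_3$, whose common cycle\dash neighbour is $C_4$ — must in fact form a separating pair of $G$ and hence be adjacent in $C_{R}(G)$, contradicting inducedness. The plan here is to use Proposition \ref{clique-sequence} to build a clique sequence along a shortest $S$\dash avoiding path realising any putative bridge between the two ends inside $H_0$, and Proposition \ref{CRG-path} to control the non\dash edges of the cycle that carry a non\dash empty intersection. Feeding in the separating\dash pair conditions on the three edges $C_2C_3$, $C_3C_4$, $C_4C_0$ should pin the bridging route so tightly that every such route must pass through the relevant intersection, which is exactly the separation statement required.

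The main obstacle is precisely this last step, and it is where the hypothesis $n=5$ has to be used in an essential way. The local picture produced by the minimal edge is \emph{not} contradictory on its own: $C_{R}(G)$ does contain induced six\dash cycles, and these realise the identical case\dash(i) configuration, so no argument resting on the reduction alone can succeed. What singles out $n=5$ is that the reduced path $C_2\,C_3\,C_4\,C_0$ has both of its interior cliques, $C_3$ and $C_4$, adjacent to an endpoint (namely $C_2$ and $C_0$); once $n\geq 6$ there is an interior clique adjacent to neither end, and such a clique is exactly what permits a bridging path to slip past the controlling intersection. Turning this ``only two middle cliques'' phenomenon into a clean separation statement, while ruling out the evasion that is genuinely available when $n\geq 6$, is the delicate part of the argument.
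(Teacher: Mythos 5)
Your normalisation is correct and coincides with how the paper begins: pick a minimal edge $C_0C_1$, set $S=C_0\cap C_1$, note that Lemma \ref{holefree-lemma} excludes outcome (iii) since $n=5$, and use the reflection $i\mapsto 1-i$ to fix one of (i)/(ii). Your deductions that $C_1\cap C_i\subseteq S$ for $i\neq 1$ and that the second edge at $C_1$ has intersection exactly $S$ (so $S\subseteq C_0\cap C_2$ in your labelling) are also right; they are the mirror image of Claim \ref{five-hole-1} in the paper. But everything after that is a plan rather than a proof, and you concede as much: the assertion that feeding the separating-pair conditions on the three remaining cycle edges into Propositions \ref{clique-sequence} and \ref{CRG-path} ``should pin the bridging route so tightly'' is precisely the step you never execute, and it is the entire content of the lemma. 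Your final paragraph, which correctly observes that the reduction alone cannot suffice because induced six-cycles genuinely occur (Figure \ref{Fig1}), is an accurate diagnosis of why the missing step is hard --- but diagnosing the obstacle is not overcoming it. As it stands the proposal proves nothing beyond what Lemma \ref{holefree-lemma} already gives.

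It is also worth noting that your intended route differs from the one the paper actually takes, in a way that matters. You aim to \emph{add} a chord, by showing that a non-adjacent pair such as $C_0$ and $C_3$ forms a separating pair; but adjacency in $C_R(G)$ requires $C_0\cap C_3\neq\emptyset$, which nothing in your setup establishes (both $C_0-S$ and $C_3-S$ lie in the same component $H_0$ under your case (i), so you do not even get $C_0\cap C_3\subseteq S$ for free). The paper instead \emph{destroys} an existing edge: in its labelling (case (ii)) it shows the edge of the cycle antipodal to the minimal one, joining $C_2$ and $C_3$, cannot satisfy the separating-pair condition. This requires a chain of vertex-level claims absent from your proposal: neither $C_2$ nor $C_3$ contains $S$ (Claim \ref{five-hole-2}, since equality $C_0\cap C_i=S$ would force a chord); $C_2\cap C_4\subseteq C_1$ and $C_3\cap C_1\subseteq C_4$ (Claim \ref{five-hole-3}); $C_2$ meets $C_1-C_4$ and $C_3$ meets $C_4-C_1$ (Claim \ref{five-hole-4}); and a dichotomy (Claim \ref{five-hole-5}) that yields a vertex $x\in S$ outside both $C_2$ and $C_3$. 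The punchline is then a concrete three-vertex path $y,x,z$ with $y\in C_2-C_3$, $z\in C_3-C_2$, which is $(C_2\cap C_3)$-avoiding and so contradicts the adjacency of $C_2$ and $C_3$ in $C_R(G)$. This is where $n=5$ enters essentially, through the exact adjacency pattern among the four cliques off the minimal edge --- the phenomenon you gesture at but do not convert into an argument. The gap is therefore genuine: the crux of the proof is missing, and the strategy you sketch in its place is both unverified and not obviously repairable.
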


\begin{proof}
Assume otherwise and let $G$ be a chordal graph such that $C_{R}(G)$ contains an induced cycle with five vertices.
Let $C_{0},C_{1},C_{2}, C_{3}, C_{4}$ be the maximal cliques in this cycle, where the indices are from $\mathbb{Z}/5\mathbb{Z}$ and $C_{i}$ is adjacent to $C_{j}$ if and only if $j\in\{i-1,i+1\}$.
By adding a constant to these indices as necessary, we may assume that
\[|C_{0}\cap C_{1}|\leq |C_{i}\cap C_{i+1}|\]
for all $i\in \mathbb{Z}/5\mathbb{Z}$, so that the edge between $C_{0}$ and $C_{1}$ is a minimal edge of the cycle.
Let $S$ be $C_{0}\cap C_{1}$.
Note that $S$ is non-empty.

Now we apply Lemma~\ref{holefree-lemma}.
By applying the permutation $\rho\colon i\mapsto 1-i$ as necessary, we may assume that statement (ii) in Lemma~\ref{holefree-lemma} applies.
Therefore we let $H_{0}$ and $H_{1}$ be connected components of $G-S$ such that $H_{0}$ contains $C_{0}-S$ and $H_{1}$ contains $C_{1}-S$, $C_{2}-S$, $C_{3}-S$, and $C_{4}-S$.

\begin{claim}
\label{five-hole-1}
$C_{0}\cap C_{4}=S=C_{0}\cap C_{1}$.
\end{claim}

\begin{proof}
Because $C_{0}-S$ and $C_{4}-S$ are contained in different components of $G-S$, it follows that $C_{0}\cap C_{4}\subseteq S$.
All we have left to prove is that this containment is not proper.
If it were proper, then we would contradict the assumption that the edge between $C_{0}$ and $C_{1}$ is minimal.
\end{proof}

\begin{claim}
\label{five-hole-2}
Neither $C_{2}$ nor $C_{3}$ contains $S$.
\end{claim}

\begin{proof}
Note that $C_{0}\cap C_{2}\subseteq S$ because $C_{0}-S$ and $C_{2}-S$ are contained in different components of $G-S$.
Certainly any path from a vertex of $C_{0}-C_{2}$ to a vertex of $C_{2}-C_{0}$ must use a vertex of $S$.
If $C_{0}\cap C_{2} = S$, then $C_{0}$ and $C_{2}$ form a separating pair, so $C_{0}$ and $C_{2}$ are adjacent in $C_{R}(G)$.
This contradicts the fact that $C_{0}$ and $C_{2}$ are non-consecutive vertices in an induced cycle.
The same argument shows that $C_{3}$ does not contain $S$.
\end{proof}

\begin{claim}
\label{five-hole-3}
$C_{2}\cap C_{4}\subseteq C_{1}$ and $C_{3}\cap C_{1}\subseteq C_{4}$.
\end{claim}

\begin{proof}
Assume that $x$ is a vertex of $C_{2}\cap C_{4}$ that is not in $C_{1}$.
By Claim~\ref{five-hole-2} we can let $y$ be a vertex in $S-C_{2}$.
Thus $y$ is in $C_{1}-C_{2}$.
So $x$ is in $C_{2}-C_{1}$ and $y$ is in $C_{1}-C_{2}$.
Claim~\ref{five-hole-1} implies that $y$ is in $C_{4}$.
As $x$ is also in $C_{4}$ we see that $x$ and $y$ are adjacent.
Because $C_{1}$ and $C_{2}$ are adjacent in $C_{R}(G)$ they have a non-empty intersection, but now the edge $xy$ shows that $C_{1}$ and $C_{2}$ do not form a separating pair and we have a contradiction.
A symmetric argument shows $C_{3}\cap C_{1}\subseteq C_{4}$.
\end{proof}

\begin{claim}
\label{five-hole-4}
$C_{2}$ contains a vertex of $C_{1}-C_{4}$ and $C_{3}$ contains a vertex of $C_{4}-C_{1}$.
\end{claim}

\begin{proof}
By symmetry it suffices to prove the first statement.
Assume that $C_{2}$ contains no vertex of $C_{1}-C_{4}$.
Because $C_{1}$ and $C_{2}$ are adjacent in $C_{R}(G)$, they have at least one vertex in common.
By our assumption, no vertex of $C_{1}\cap C_{2}$ is in $C_{1}-C_{4}$, so any such vertex must be in $C_{1}\cap C_{4}$.
Therefore $C_{2}$ and $C_{4}$ are not disjoint.
Since $C_{2}$ and $C_{4}$ are not adjacent in $C_{R}(G)$, we can let $P$ be a $(C_{2}\cap C_{4})$\dash avoiding path from a vertex $x\in C_{2}-C_{4}$ to $y\in C_{4}-C_{2}$.

Our assumption means that $x$ is not in $C_{1}$, so it is in $C_{2}-C_{1}$.
Our assumption and Claim~\ref{five-hole-3} imply that $C_{2}\cap C_{4}=C_{2}\cap C_{1}$.
Therefore $P$ is a $(C_{2}\cap C_{1})$\dash avoiding path.
But Claim~\ref{five-hole-2} shows that we can choose a vertex $z$ in $S-C_{2}$.
Thus $z$ is in $C_{1}-C_{2}$ and Claim~\ref{five-hole-1} shows that $z$ is in $C_{4}$.
Assuming that $z$ and $y$ are not equal, they are adjacent, as both are in $C_{4}$.
By appending (if necessary) the edge $yz$ to the end of $P$ we obtain a $(C_{1}\cap C_{2})$\dash avoiding path from a vertex in $C_{2}-C_{1}$ to a vertex in $C_{1}-C_{2}$.
Hence $C_{1}$ and $C_{2}$ do not form a separating pair and this contradicts the fact that they are adjacent in $C_{R}(G)$.
\end{proof}

\begin{claim}
\label{five-hole-5}
Either $C_{2}\cap (C_{1}\cap C_{4}) \subseteq C_{3}$ or $C_{3}\cap (C_{1}\cap C_{4})\subseteq C_{2}$.
\end{claim}

\begin{proof}
Note that $C_{2}\cap C_{3}$ is non-empty, since $C_{2}$ and $C_{3}$ are adjacent in $C_{R}(G)$.
If the claim fails, then we choose $x\in (C_{2}\cap C_{1}\cap C_{4})-C_{3}$ and $y\in (C_{3}\cap C_{1}\cap C_{4})-C_{2}$.
Now $x$ and $y$ are both in $C_{1}\cap C_{4}$, so they are adjacent.
Moreover $x$ is in $C_{2}-C_{3}$ and $y$ is in $C_{3}-C_{2}$.
Thus $C_{2}$ and $C_{3}$ do not form a separating pair and we have a contradiction.
\end{proof}

By using Claim~\ref{five-hole-5}, we will assume that $C_{2}\cap (C_{1}\cap C_{4})$ is a subset of $C_{3}$.
The other outcome from Claim~\ref{five-hole-5} yields to a symmetric argument.
Using Claim~\ref{five-hole-2} we choose a vertex $x\in S$ that is not in $C_{3}$.
Note that Claim~\ref{five-hole-1} implies that $S$ is contained in $C_{1}\cap C_{4}$.
So $x$ is in $(C_{1}\cap C_{4})-C_{3}$.
The assumption $C_{2}\cap (C_{1}\cap C_{4})\subseteq C_{3}$ implies that $x$ is not in $C_{2}$.

By Claim~\ref{five-hole-4} we can also choose $y$ in $C_{2}\cap (C_{1}-C_{4})$ and $z$ in $C_{3}\cap (C_{4}-C_{1})$.
Because $y$ is in $C_{1}-C_{4}$ and 
Claim~\ref{five-hole-3} says that $C_{3}\cap C_{1}$ is a subset of $C_{4}$, it follows that $y$ is not in $C_{3}$.
Therefore $y$ belongs to $C_{2}-C_{3}$.
A symmetric argument shows that $z$ is in $C_{3}-C_{2}$.
Now $x$ and $y$ are adjacent as they are both in $C_{1}$, and $x$ and $z$ are adjacent as they are both in $C_{4}$.
Note that $C_{2}\cap C_{3}$ is non-empty as $C_{2}$ and $C_{3}$ are adjacent in $C_{R}(G)$.
But the path with vertex sequence $y,x,z$ is $(C_{2}\cap C_{3})$\dash avoiding, so $C_{2}$ and $C_{3}$ do not form a separating pair.
This final contradiction completes the proof.
\end{proof}

Lemma~\ref{no-five-holes} shows that the class of reduced clique graphs is contained in the class of graphs with no length-five induced cycle.
We next show that this containment is proper.

\begin{proposition}
Let $n\geq 4$ be an integer.
There is no chordal graph $G$ such that either $C(G)$ or $C_{R}(G)$ is a cycle with $n$ vertices.
\end{proposition}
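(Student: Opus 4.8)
The plan is to treat the two claims separately, disposing of the clique graph via Szwarcfiter and Bornstein's characterisation and the reduced clique graph via the structural machinery of the previous section.

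First I would handle $C(G)$. Suppose for contradiction that $C(G)$ is a cycle with $n\geq 4$ vertices; since a cycle is connected, $G$ is connected, so Theorem \ref{SB} applies and gives a spanning tree $T$ of the cycle such that, for every edge $uv$ of the cycle, the vertices on the path of $T$ from $u$ to $v$ induce a clique of $C(G)$. Every spanning tree of a cycle is obtained by deleting a single edge, so $T$ is a Hamiltonian path whose two end-vertices, say $u$ and $v$, are exactly the ends of the deleted edge; in particular $u$ and $v$ are adjacent in $C(G)$. The path of $T$ from $u$ to $v$ is all of $T$, so its vertex set is $V(C(G))$, which would therefore have to induce a clique. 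But a cycle on $n\geq 4$ vertices is not complete, and this contradiction finishes the clique graph case.

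For $C_R(G)$ I would argue by contradiction, assuming $C_R(G)$ is a cycle on $n\geq 4$ vertices $C_0,\dots,C_{n-1}$. Since a cycle is connected, $G$ is connected by Corollary \ref{connected}, and I fix the legitimate weighting $C\cap C'\mapsto |C\cap C'|$ so that Theorem \ref{maintheorem2} is available. After relabelling I may take $C_0C_1$ to be a minimal edge with $S=C_0\cap C_1$, and I apply Lemma \ref{holefree-lemma}. In conclusions (i) and (ii) (which are interchanged by the permutation $i\mapsto 1-i$, so I assume (ii)), every $C_j-S$ with $j\neq 0$ lies in $H_1$ while $C_0-S$ lies in $H_0$. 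Minimality forces $C_0\cap C_{n-1}=S$, and for each $j\notin\{0,1,n-1\}$ the set $C_0\cap C_j$ is a proper subset of $S$, since equality would make $C_0,C_j$ a separating pair and hence adjacent in $C_R(G)$. Thus both $C_0C_1$ and $C_0C_{n-1}$ are minimum-weight edges, so deleting $C_0C_{n-1}$ from the cycle yields a maximum-weight spanning tree, which is a clique tree by Theorem \ref{maintheorem2}. This clique tree is the path $C_0C_1\cdots C_{n-1}$, whose ends are $C_0$ and $C_{n-1}$; as each vertex of $S$ lies in both ends, the clique-tree property forces $S$ into every $C_i$. For $n\geq 4$ the index $2$ satisfies $2\notin\{0,1,n-1\}$, so $C_0\cap C_2\subsetneq S$ and hence $S\not\subseteq C_2$, a contradiction.

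The remaining possibility, conclusion (iii) of Lemma \ref{holefree-lemma}, occurs only when $n=4$, and this is the step I expect to be the main obstacle, since here $S$ lies in all four cliques and the clique-tree argument above yields nothing. In this case each of the four cycle edges has intersection exactly $S$, so in particular $S\subseteq C_2$, and I would instead show directly that the non-adjacent cliques $C_0$ and $C_2$ form a separating pair, which is the desired contradiction. If they did not, there would be a shortest $(C_0\cap C_2)$-avoiding path from a vertex of $C_0-C_2$ to a vertex of $C_2-C_0$; applying Proposition \ref{clique-sequence} to it, each consecutive pair of path vertices together with $C_0\cap C_2$ lies in a maximal clique. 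Now any vertex of $C_0-C_2$ lies in $C_0$ and, since $C_0$ meets $C_1$ and $C_3$ only in $S\subseteq C_2$, in no other maximal clique; as the internal path vertices also avoid $C_0\cap C_2\supseteq S$, an induction along the path forces every vertex of the path into $C_0$, including its final vertex in $C_2-C_0$. This is impossible, so $C_0$ and $C_2$ separate and we are done. The delicate points are checking that the minimal-edge hypothesis really pins down the relevant intersections in each conclusion of Lemma \ref{holefree-lemma}, and that in the $n=4$ case the induction never leaves $C_0$ through a vertex of $S$, which is precisely what $S\subseteq C_2$ guarantees.
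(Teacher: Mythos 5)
Your proof is correct, and while it shares the paper's skeleton---the $C(G)$ half via Theorem \ref{SB} is essentially identical, and the $C_{R}(G)$ half uses the same minimal-edge set-up and case split from Lemma \ref{holefree-lemma}---both cases are then discharged by genuinely different mechanisms. In cases (i)/(ii) the paper stays inside $C_{R}(G)$: it applies Proposition \ref{CRG-path} to the non-adjacent pair $C_{1},C_{n-1}$ (both of which contain $S$) to force $S\subseteq C_{2}$, whence $C_{0}\cap C_{2}=S$ makes $C_{0},C_{2}$ a separating pair and hence adjacent, a contradiction. You instead first prove $C_{0}\cap C_{j}\subsetneq S$ for every non-neighbour $C_{j}$ of $C_{0}$ (the same separating-pair observation, used contrapositively) and then invoke the heavier Theorem \ref{maintheorem2}: deleting the second minimum-weight edge $C_{0}C_{n-1}$ yields a maximum-weight spanning tree, hence a clique tree, and the clique-tree property along the resulting Hamiltonian path forces $S\subseteq C_{2}$, contradicting $C_{0}\cap C_{2}\subsetneq S$. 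This is legitimate (no circularity, since Theorem \ref{maintheorem2} is proved before and independently of this proposition) and buys a quick finish at the price of heavier machinery. In case (iii) the paper uses Proposition \ref{CRG-path} to get $C_{0}\cap C_{2}$ properly contained in $S$ (or empty) and then builds a shortest path inside $H_{0}$ from $C_{0}-S$ to $C_{2}-S$ whose edges can lie in none of the four maximal cliques; you instead observe that all four cycle edges have intersection exactly $S$, so $S\subseteq C_{0}\cap C_{2}\neq\emptyset$, and show directly that $C_{0}$ and $C_{2}$ form a separating pair via Proposition \ref{clique-sequence} and an induction along a shortest $(C_{0}\cap C_{2})$-avoiding path. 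Your induction hinges on $G$ having exactly the four maximal cliques $C_{0},\ldots,C_{3}$, which is valid here precisely because the vertices of $C_{R}(G)$ are \emph{all} maximal cliques of $G$ (in contrast to Non-theorem \ref{nontheorem}, where the analogous ``only these maximal cliques'' claim failed for an induced subgraph). Incidentally, your first observation in case (iii) already clashes with what Proposition \ref{CRG-path} yields there (the consecutive intersections on the only two candidate paths equal $S$, yet would have to properly contain $C_{0}\cap C_{2}\supseteq S$), so combining your opening step with the paper's opening step gives an even shorter contradiction than either path-based argument.
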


\begin{proof}
Let $H$ be a cycle with at least four vertices.
Any spanning tree of $H$ is a Hamiltonian path.
The end vertices of this path are adjacent in $H$, but the path of the spanning tree between these vertices does not induce a clique in $H$.
Therefore $H$ is not isomorphic to $C(G)$ for any chordal graph $G$ by Theorem~\ref{SB}.

We turn to reduced chordal graphs.
Assume for a contradiction that $G$ is a chordal graph with $C_{0},C_{1},\ldots, C_{n-1}$ as its list of maximal cliques, where the indices are from $\mathbb{Z}/n\mathbb{Z}$ for some $n\geq 4$, and $C_{i}$ is adjacent to $C_{j}$ in $C_{R}(G)$ if and only if $j\in\{i-1,i+1\}$.
We can assume without loss of generality that the edge between $C_{0}$ and $C_{1}$ is a minimal edge of $C_{R}(G)$.
Let $S$ be $C_{0}\cap C_{1}$.
Assume that statement (iii) in Lemma~\ref{holefree-lemma} holds.
Thus $n=4$ and there are distinct connected components, $H_{0}$ and $H_{1}$, of $G-S$ such that $H_{0}$ contains $C_{0}-S$ and $C_{2}-S$ while $H_{1}$ contains $C_{1}-S$ and $C_{3}-S$.
Note that $C_{0}\cap C_{3}\subseteq S$, and in fact $C_{0}\cap C_{3}$ is equal to $S$, or else the minimality of the $C_{0}$-$C_{1}$ edge is contradicted.

Either  $C_{0}\cap C_{2}$ is empty, or it is not.
In the latter case, we can apply Proposition~\ref{CRG-path} to $C_{0}$ and $C_{2}$.
We see that either $C_{0}\cap C_{1}$ or $C_{0}\cap C_{3}$ properly contains $C_{0}\cap C_{2}$.
By symmetry, we can assume $C_{0}\cap C_{2}$ is a proper subset of $C_{0}\cap C_{1}=S$.
Thus $C_{0}-S$ and $C_{2}-S$ are disjoint sets.
We can let $P$ be a shortest-possible path of $H_{0}$ from a vertex of $C_{0}-S$ to a vertex of $C_{2}-S$.
On the other hand, if $C_{0}\cap C_{2}$ is empty, then $C_{0}-S$ and $C_{2}-S$ are again disjoint subsets in $H_{0}$, so we again let $P$ be a shortest-possible path of $H_{0}$ from $C_{0}-S$ to $C_{2}-S$.
In either case, $P$ contains exactly one vertex of $C_{0}$ and exactly one vertex of $C_{2}$.
Then $P$ must contain at least one edge, and this edge is in a maximal clique that is equal to neither $C_{0}$ nor $C_{2}$.
Nor can this maximal clique be $C_{1}$ or $C_{3}$, because any edge of $P$ is contained in $H_{0}$.
So we have a contradiction in the case that 
(iii) in Lemma~\ref{holefree-lemma} holds.

Now we assume that either (i) or (ii) holds.
By applying the permutation $\rho\colon i\mapsto 1-i$ as necessary, we will assume that $H_{0}$ and $H_{1}$ are distinct connected components of $G-S$, and that $H_{0}$ contains $C_{0}-S$ while $H_{1}$ contains $C_{i}-S$ for $i=\{1,2,\ldots, n-1\}$.
By the same argument as earlier, we can see that $C_{n-1}$ contains $S$, or else the choice of the $C_{0}-C_{1}$ edge is contradicted.

Now $C_{1}\cap C_{n-1}$ contains $S$, and $C_{1}$ and $C_{n-1}$ are non-adjacent in $C_{R}(G)$.
We apply Proposition~\ref{CRG-path} and see that there is a path of $C_{R}(G)$ from $C_{1}$ to $C_{n-1}$ such that every intersection of consecutive cliques in the path properly contains $C_{1}\cap C_{n-1}$.
This path is either $C_{1},C_{0},C_{n-1}$, or it is $C_{1},C_{2},\ldots, C_{n-1}$.
Assume the former.
Then $C_{1}\cap C_{0}=S$ properly contains $C_{1}\cap C_{n-1}\supseteq S$ and we have a contradiction.
Hence any intersection of consecutive cliques in $C_{1},C_{2},\ldots, C_{n-1}$ properly contains $C_{1}\cap C_{n-1}$, and hence contains $S$.
It follows that $C_{2}$ contains $S$ and thus $C_{0}\cap C_{2}$ is non-empty.

Since $C_{0}-S$ and $C_{2}-S$ are contained in different components of $G-S$, any path of a vertex from $C_{0}-C_{2}$ to a vertex of $C_{2}-C_{0}$ must contain a vertex of $S=C_{0}\cap C_{2}$.
Thus $C_{0}$ and $C_{2}$ form a separating pair in $G$, and hence they are adjacent in $C_{R}(G)$, which is a contradiction.
\end{proof}

\subsection{Clique graphs vs.\ reduced clique graphs}
Consider the classes $\{C(G)\}$ and $\{C_{R}(G)\}$, where $G$ ranges over all chordal graphs.
Proposition~\ref{clique-wheels} and Lemma~\ref{no-five-holes} show that the wheel with five spokes is isomorphic to a graph in the former class but not the latter.
Is there a graph that is isomorphic to a graph in the latter class but not the former?
We will show that the answer is, once again, yes.
Recall that if $G$ and $G'$ are disjoint graphs, then $G\boxtimes G'$ is obtained from the union of $G$ and $G'$ by making every vertex of $G$ adjacent to every vertex of $G'$.
We use $P_{n}$ to denote the path with $n$ vertices.

\begin{lemma}
\label{path-products}
Let $m,n\geq 1$ be integers.
Then $P_{m}\boxtimes P_{n}$ is isomorphic to the reduced clique graph of a chordal graph.
If $n\geq 22$, then $P_{n}\boxtimes P_{n}$ is not isomorphic to the clique graph of a chordal graph.
\end{lemma}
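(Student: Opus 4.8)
The statement has two halves, and I would treat them separately.

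For the realizability half, the plan is to exhibit an explicit chordal graph and read off its reduced clique graph. Take two \emph{fans} sharing a common apex: let $G$ consist of a vertex $w$ together with two induced paths $p_{0}p_{1}\cdots p_{m+1}$ and $q_{0}q_{1}\cdots q_{n+1}$, and join $w$ to every $p_{i}$ and every $q_{j}$. This $G$ is chordal, because any cycle is confined to a single fan: a cycle meeting both the $p$\dash path and the $q$\dash path would have to pass through the cut vertex $w$ twice. Its maximal cliques are exactly the triangles $A_{i}=\{w,p_{i},p_{i+1}\}$ for $0\le i\le m$ and $B_{j}=\{w,q_{j},q_{j+1}\}$ for $0\le j\le n$, giving $m+1$ and $n+1$ nodes of $C_{R}(G)$. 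I would then verify the adjacencies directly from the definition of a separating pair: consecutive triangles $A_{i},A_{i+1}$ meet in $\{w,p_{i+1}\}$ and are separated; non-consecutive $A_{i},A_{k}$ meet only in $\{w\}$ and fail to separate, since the $p$\dash path itself is a $\{w\}$\dash avoiding path joining their non-$w$ vertices; and any $A_{i},B_{j}$ meet only in $\{w\}$ and do separate, since deleting $w$ disconnects the two fans. The resulting edge set is precisely that of $P_{m}\boxtimes P_{n}$, so $C_{R}(G)\cong P_{m}\boxtimes P_{n}$.

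For the non-realizability half I would use the Szwarcfiter--Bornstein characterization (Theorem \ref{SB}). Suppose $H=P_{n}\boxtimes P_{n}\cong C(G)$ for some chordal $G$, with colour classes $A=\{a_{0},\dots,a_{n}\}$ and $B=\{b_{0},\dots,b_{n}\}$. Then $H$ has a spanning tree $T$ in which, for every edge $uv$, the $T$\dash path from $u$ to $v$ induces a clique of $H$. The crucial feature of $H$ is that every clique uses at most two (necessarily consecutive) vertices of $A$ and at most two of $B$; in particular every such $T$\dash path has at most two vertices in each class. Now delete from $T$ all edges joining $A$ to $B$: the remaining components, which I will call \emph{pieces}, each lie wholly in $A$ or wholly in $B$ and are runs of consecutive vertices of the corresponding path. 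Contracting the pieces produces a tree $\mathcal{T}$ that is bipartite between the $A$\dash pieces and the $B$\dash pieces.

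Three observations then finish the argument. First, each piece has at most three vertices: for a piece $\{a_{s},\dots,a_{t}\}$ and any $b_{j}$, the unique vertex at which the $T$\dash path to $b_{j}$ leaves the piece must lie within one step of $a_{s}$ and within one step of $a_{t}$ (otherwise the $T$\dash path from $a_{s}$, or from $a_{t}$, to $b_{j}$ would collect three vertices of $A$), forcing $t\le s+2$. Second, $\mathcal{T}$ has maximum degree two, hence is a path: if a $B$\dash piece $Q$ were adjacent to three $A$\dash pieces, their three attachment vertices would be pairwise adjacent in $A$ (each pair lies on a $T$\dash path that stays inside $Q$ between its two $A$\dash endpoints), giving a triangle in the path $P_{n}$, which is impossible. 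Third, any $A$\dash piece and any $B$\dash piece are at distance at most three in $\mathcal{T}$, since the corresponding $T$\dash path meets at most two pieces of each class; on a path $\mathcal{T}$ this limits the number of pieces to at most five. Combining these, $|V(H)|=2(n+1)\le 3\cdot 5=15$, which is false for $n\ge 22$ (indeed already for $n\ge 7$), the desired contradiction.

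The realizability half is routine once the two-fan construction is found. The main obstacle is the non-realizability half, and within it the decisive step is the second observation: recognizing that the absence of triangles in a path forces the contracted tree $\mathcal{T}$ to have maximum degree two. Reaching that point requires first isolating the right invariant---the decomposition of $T$ into monochromatic pieces---and then checking that the clique condition of Theorem \ref{SB} translates cleanly into the three bounds above; the careful bookkeeping lies in the piece-size bound and the distance bound.
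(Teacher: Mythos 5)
Your first half is correct and is the same construction as the paper's (an apex vertex joined to the disjoint union of two paths; the paper leaves the adjacency checks to the reader, and yours are right: consecutive triangles in one fan form separating pairs, non-consecutive ones do not because the fan's path avoids $\{w\}$, and every cross pair separates through $w$). Your second half starts the same way as the paper's proof (Theorem \ref{SB}, then the observation that any $T$\dash path between adjacent vertices carries at most two, necessarily consecutive, vertices of each class), and your Observations 1 and 3 about the monochromatic pieces are sound. But Observation 2 --- that the contracted tree $\mathcal{T}$ has maximum degree two --- is a genuine gap, and it is the load-bearing step. Your justification is circular: Theorem \ref{SB} constrains only $T$\dash paths between pairs that are \emph{adjacent in $H$}, and the attachment vertices of two distinct $A$\dash pieces are two vertices of the path $P_{u}$ that need not be adjacent, so no clique condition applies to the $T$\dash path between them. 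Concretely, the implication ``SB condition $\Rightarrow$ max degree two'' is false: in $P_{2}\boxtimes P_{2}$ with classes $a_{0},a_{1},a_{2}$ and $b_{0},b_{1},b_{2}$, the spanning tree with edges $b_{1}a_{0}$, $b_{1}a_{1}$, $b_{1}a_{2}$, $b_{0}b_{1}$, $b_{1}b_{2}$ satisfies the condition of Theorem \ref{SB} (every relevant $T$\dash path, e.g.\ $a_{0},b_{1},a_{1}$, induces a clique), yet after deleting the three $A$--$B$ edges the single $B$\dash piece $\{b_{0},b_{1},b_{2}\}$ is adjacent in $\mathcal{T}$ to three $A$\dash pieces.

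The gap cannot be waved away, because Observations 1 and 3 alone do not bound $|V(H)|$: a spider-shaped $\mathcal{T}$ with one central $B$\dash piece and arbitrarily many pendant $A$\dash pieces has every $A$\dash piece at distance one from every $B$\dash piece and all pieces of size at most three, so your final count never gets off the ground. (Branching \emph{is} eventually incompatible with both classes being large --- e.g.\ if two different legs each carry a further $B$\dash piece, then a $T$\dash path from one leg into the other leg's $B$\dash piece does invoke the clique condition and forces attachment vertices to be adjacent --- but making that precise is exactly the work your proof omits.) The paper sidesteps the issue with a different decomposition: it fixes a longest path $P=p_{0},\ldots,p_{k-1}$ of $T$, partitions each of $P_{u}$ and $P_{v}$ into sets $U_{i}$, $V_{i}$ according to the vertex of $P$ at which they attach, shows by a disjoint-paths argument that at least one class satisfies $|V_{i}|\leq 3$ for all $i$, and then uses the at-most-three-edges bound to confine the other class's attachments to a window of seven consecutive positions, giving $n+1\leq 21$. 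You would need either to replace Observation 2 by an argument of this kind or to prove a corrected structural statement about how much $\mathcal{T}$ can branch.
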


\begin{proof}
Let $G$ be the graph obtained from the disjoint union of $P_{m+1}$ and $P_{n+1}$ by adding a new vertex that is adjacent to every vertex of the disjoint union.
It is easy to confirm that $G$ is chordal, and that $C_{R}(G)$ is isomorphic to $P_{m}\boxtimes P_{n}$.

For the second statement, we let $H$ be a graph with disjoint induced paths $P_{u}=u_{0},u_{1},\ldots, u_{n-1}$ and $P_{v}=v_{0},v_{1},\ldots, v_{n-1}$, where $n\geq 22$ and every $u_{i}$ is adjacent to every $v_{j}$.
Thus $H$ is isomorphic to $P_{n}\boxtimes P_{n}$.
We will assume for a contradiction that $H$ is isomorphic to $C(G)$ for some chordal graph $G$.
Because $C(G)$ is connected it follows easily that $G$ is connected, so we can apply Theorem~\ref{SB} and deduce that $H$ has a spanning tree $T$, where the path of $T$ from $u$ to $v$ induces a clique of $H$ whenever $u$ and $v$ are adjacent in $H$.

\begin{claim}
\label{path-product-1}
Let $i$ and $j$ be integers satisfying $0<i,j<n-1$.
The path of $T$ from $u_{i}$ to $v_{j}$ is contained in one of:
$\{u_{i},u_{i+1},v_{j},v_{j+1}\}$,
$\{u_{i},u_{i+1},v_{j-1},v_{j}\}$,
$\{u_{i-1},u_{i},v_{j},v_{j+1}\}$,
$\{u_{i-1},u_{i},v_{j-1},v_{j}\}$.
\end{claim}

\begin{proof}
Let $P$ be the path of $T$ from $u_{i}$ to $v_{j}$.
Since $u_{i}$ is adjacent to $v_{j}$ it follows that $P$ induces a clique of $H$.
As $u_{i}$ is not adjacent to any of the vertices in $u_{0},\ldots, u_{i-2},u_{i+2},\ldots, u_{n-1}$, it follows that the vertices of $P$ that are in $P_{u}$ belong to $\{u_{i-1},u_{i},u_{i+1}\}$.
Similarly, the vertices of $P$ that are in $P_{v}$ belong to $\{v_{j-1},v_{j},v_{j+1}\}$.
But $u_{i-1}$ is not adjacent to $u_{i+1}$, so $P$ does not contain both.
The claim follows by symmetry.
\end{proof}

Claim~\ref{path-product-1} implies that the path of $T$ between $u_{i}$ and $v_{j}$ has at most three edges.

Let $P$ be a longest-possible path of $T$ and let $p_{0},p_{1},\ldots, p_{k-1}$ be the vertices of $P$.
For $i=0,1,\ldots, k-1$, let $U_{i}$ be the set of vertices in $P_{u}$ such that $u$ is in $U_{i}$ if and only if the shortest path of $T$ from $u$ to a vertex in $P$ contains $p_{i}$.
We define $V_{i}$ to be the analogous set of vertices in $P_{v}$.
Note that $(U_{0},U_{1},\ldots, U_{k-1})$ is a partition of the vertices of $P_{u}$, and $(V_{0},V_{1},\ldots, V_{k-1})$ is a partition of the vertices of $P_{v}$.

\begin{claim}
\label{path-product-2}
Either
\[\max\{|U_{i}|\colon 0\leq i\leq k-1\}\leq 3\quad \text{or}\quad \max\{|V_{i}|\colon 0\leq i\leq k-1\}\leq 3.\]
\end{claim}

\begin{proof}
Assume for a contradiction that $|U_{i}|\geq 4$ and $|V_{j}|\geq 4$.
Let $p$ and $q$, respectively, be the smallest (largest) integers such that $u_{p}, u_{q}\in U_{i}$.
Then $q-1 > p+1$ because $|U_{i}|\geq 4$.
In the same way, let $s$ and $t$ be the smallest (largest) integers such that $v_{s},v_{t}\in V_{j}$.
Then $t -1> s+1$.
It is simple to see from Claim~\ref{path-product-1} that the path of $T$ from $u_{p}$ to $v_{s}$ has no vertex in common with the path of $T$ from $u_{q}$ to $v_{t}$.
But this contradicts the fact that both paths contain $p_{i}$ and $p_{j}$.
\end{proof}

By using Claim~\ref{path-product-2}, we will assume without loss of generality that $|V_{i}|\leq 3$ for each $i=0,1,\ldots, k-1$.
Since $(U_{0},U_{1},\ldots, U_{k-1})$ is a partition of the vertices in $P_{u}$ we can choose $i$ so that $U_{i}$ contains a vertex $x$.
We claim that if $j\leq i-4$ or $j\geq i+4$, then $V_{j}=\emptyset$.
If this fails, then the path of $T$ from a vertex in $V_{j}$ to $x$ contains at least four edges of $P$.
But this contradicts our earlier conclusion that any path of $T$ from a vertex of $P_{u}$ to a vertex of $P_{v}$ contains at most three edges.
So now the vertices of $P_{v}$ belong to
\[
V_{i-3}\cup V_{i-2}\cup \cdots \cup V_{i+2}\cup V_{i+3}
\]
and this union has cardinality at most $7\times 3$.
Thus $P_{v}$ contains at most $21$ vertices and this contradicts $n\geq 22$.
\end{proof}

\section{Conclusions and open problems}

We restate Conjecture~\ref{odd-holes-conjecture} here.

\begin{conjecture}
Let $k>3$ be an odd integer.
There is no chordal graph $G$ such that $C_{R}(G)$ contains an induced cycle with exactly $k$ vertices.
\end{conjecture}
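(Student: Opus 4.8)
The plan is to mimic the strategy behind Lemma~\ref{no-five-holes}, pushing the separating-pair analysis to longer cycles while isolating the feature that ought to force $n\le 6$. Suppose for a contradiction that $C_{0},C_{1},\ldots,C_{n-1}$ is the cyclic order of an induced cycle of $C_{R}(G)$ with $n\ge 7$, indices in $\mathbb{Z}/n\mathbb{Z}$. After adding a constant to the indices, assume the edge $C_{0}C_{1}$ is minimal and set $S=C_{0}\cap C_{1}$. Since $n\ge 5$, Lemma~\ref{holefree-lemma} leaves only options (i) and (ii), and after applying $\rho\colon i\mapsto 1-i$ if necessary I may assume (ii): there are distinct components $H_{0},H_{1}$ of $G-S$ with $C_{0}-S\subseteq H_{0}$ and $C_{i}-S\subseteq H_{1}$ for every $i\ne 0$. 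Two consequences should be recorded first. For each $i\ne 0$ the sets $C_{0}-S$ and $C_{i}-S$ lie in different components, so $C_{0}\cap C_{i}\subseteq S$; and minimality of the $C_{0}C_{1}$-edge applied to the cycle edge $C_{n-1}C_{0}$ forces $C_{0}\cap C_{n-1}=S$. In particular both neighbours $C_{1},C_{n-1}$ of $C_{0}$ contain $S$, and deleting $C_{0}$ leaves the induced path $C_{1},\ldots,C_{n-1}$ of $C_{R}(G)$ whose cliques all meet $H_{1}$.

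The first substantive step is to constrain the interior cliques. Because $C_{1}$ and $C_{n-1}$ both contain $S$ but are non-adjacent in the induced cycle (their cyclic distance is two and $n\ge 5$), Proposition~\ref{CRG-path} supplies a path of $C_{R}(G)$ from $C_{1}$ to $C_{n-1}$ along which consecutive intersections properly contain $C_{1}\cap C_{n-1}\supseteq S$. I would first try to argue, exactly as in the proof that a cycle is never a reduced clique graph, that this path is forced to run along $C_{1},C_{2},\ldots,C_{n-1}$, thereby placing $S$ inside all of $C_{2},\ldots,C_{n-2}$. With $S$ sitting in every interior clique one then has a common ``hub''. The aim is to generalise Claims~\ref{five-hole-3}--\ref{five-hole-5}: locate an adjacent interior pair $C_{m},C_{m+1}$ together with a vertex $x$ lying in cliques on both arcs (for instance $x\in (C_{1}\cap C_{n-1})-(C_{m}\cap C_{m+1})$) and vertices $y\in C_{m}-C_{m+1}$, $z\in C_{m+1}-C_{m}$ each adjacent to $x$, so that the two-edge path $y,x,z$ is $(C_{m}\cap C_{m+1})$-avoiding. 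Such a bypass would show that $C_{m}$ and $C_{m+1}$ fail to form a separating pair, contradicting their adjacency in $C_{R}(G)$.

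The main obstacle is precisely the step asserting that the path from Proposition~\ref{CRG-path} must follow the cycle. In general it may detour through maximal cliques lying outside the induced cycle, and Figure~\ref{Fig1} shows this freedom is not illusory: for $n=6$ there really is enough room to realise the cycle, so any correct argument has to break down at $n=6$ and succeed only from $n\ge 7$ onward. I therefore expect the decisive difficulty to be controlling these external cliques, and I would attack it through a minimal counterexample, minimising $|V(G)|$ (or, failing that, the number of maximal cliques outside the cycle) to force rigidity on the detour. A complementary line is to reformulate the problem in terms of the minimal separators $C_{i}\cap C_{i+1}$ of $G$: showing that these cannot be cyclically ``linked'' more than six times would give a uniform bound independent of the casework. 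Establishing either form of control, equivalently proving that the six-vertex gadget of Figure~\ref{Fig1} cannot be chained into a seven-vertex induced cycle, is the crux, and is presumably why the statement is offered only as a conjecture.
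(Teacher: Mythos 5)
You should first be aware that this statement is one of the paper's open conjectures: the authors give no proof of it, and indeed they exhibit (Figure~\ref{Fig1}) a chordal graph whose reduced clique graph contains an induced six-cycle, precisely to show that the argument of Lemma~\ref{no-five-holes} does not extend unchanged. Your proposal is, by your own admission, a plan rather than a proof, and you have correctly located the step where it fails; so the honest verdict is that there is a genuine gap, not a completed argument. The gap is this: Proposition~\ref{CRG-path} only guarantees \emph{some} path of $C_{R}(G)$ from $C_{1}$ to $C_{n-1}$ whose consecutive intersections properly contain $C_{1}\cap C_{n-1}$. In the paper's proposition ruling out cycles as reduced clique graphs, the dichotomy ``the path is either $C_{1},C_{0},C_{n-1}$ or $C_{1},C_{2},\ldots,C_{n-1}$'' is available only because there the cycle comprises \emph{all} maximal cliques of $G$, so every path of $C_{R}(G)$ is confined to cycle vertices. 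For an induced cycle this confinement evaporates, and the six-cycle example shows the detour is not hypothetical but forced: if the path did run along the cycle, you would get $S\subseteq C_{2}$, hence $C_{0}\cap C_{2}=S\ne\emptyset$; under outcome (ii) of Lemma~\ref{holefree-lemma} the sets $C_{0}-S$ and $C_{2}-S$ lie in different components of $G-S$, so $C_{0}$ and $C_{2}$ would form a non-disjoint separating pair and be adjacent in $C_{R}(G)$, contradicting the existence of the induced cycle. Since the induced six-cycle does exist, your intermediate claim (``$S$ sits inside every interior clique'') is actually \emph{false} at $n=6$, and nothing in your sketch invokes $n\geq 7$ to rescue it; whatever distinguishes seven from six is exactly the missing idea.

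Two smaller observations. First, the architecture of your plan is internally unstable: if you could establish $S\subseteq C_{2}$, the adjacency contradiction just described finishes the proof immediately, so the elaborate ``hub'' bypass generalising Claims~\ref{five-hole-3}--\ref{five-hole-5} would never be needed; that the plan simultaneously relies on and renders superfluous its second stage suggests the intended endgame has not been thought through against the actual obstruction. Second, note that the paper's proof of Lemma~\ref{no-five-holes} deliberately avoids the global path-following move: after the initial application of Lemma~\ref{holefree-lemma}, all of Claims~\ref{five-hole-1}--\ref{five-hole-5} are purely local, vertex-level arguments exploiting the definition of a separating pair directly, and they exploit the very small number of non-consecutive pairs available when $n=5$. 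For $n\geq 7$ the number of uncontrolled non-consecutive intersections $C_{i}\cap C_{j}$ grows, and your proposal offers no replacement for that bookkeeping. Your closing suggestions (minimal counterexample, or bounding how often the separators $C_{i}\cap C_{i+1}$ can be cyclically linked) are reasonable research directions, but as they stand they reproduce the paper's state of knowledge --- the five-cycle exclusion plus the six-cycle example --- without closing the gap, which is consistent with the statement being offered only as a conjecture.
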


So far as we have been able to tell, every chordal graph is isomorphic to both a clique graph, and to a reduced clique graph.
We conjecture this holds generally.

\begin{conjecture}
Let $H$ be a chordal graph.
There are chordal graphs $G$ and $G'$ such that $H$ is isomorphic to both $C(G)$ and $C_{R}(G')$.
\end{conjecture}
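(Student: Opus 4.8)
The plan is to treat the two required isomorphisms separately, since the paper supplies quite different tools for $C(G)$ and for $C_R(G')$. For the clique-graph half I would work through the Szwarcfiter--Bornstein criterion (Theorem \ref{SB}): given a chordal graph $H$, the task is to produce a spanning tree $T$ of $H$ such that, for every edge $uv$ of $H$, the $T$-path between $u$ and $v$ induces a clique of $H$. The natural candidate is to start from a perfect elimination ordering (equivalently, a clique tree) of $H$ itself, let $T$ join each vertex to a suitably chosen earlier neighbour, and then verify the edge-path condition by induction on the elimination order. For the reduced-clique-graph half I would instead give a direct construction, generalising the apex-over-a-forest idea behind Lemma \ref{path-products}: represent $H$ as the intersection graph of subtrees of a host tree, build $G'$ so that those subtrees become its maximal cliques, and use Proposition \ref{clique-tree-edges} together with the separating-pair definition to check that the induced adjacencies in $C_R(G')$ are exactly the edges of $H$.

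Of the two halves, I expect the reduced-clique-graph direction to be the one that actually succeeds. Reduced clique graphs are quite permissive (they may contain induced triangles, four-cycles, and six-cycles), the only structural obstruction proved here is the absence of induced five-cycles (Lemma \ref{no-five-holes}), and a subtree-and-apex construction should have enough freedom to avoid creating spurious separating pairs. I would therefore concentrate on making that construction work uniformly for every chordal $H$, verifying in particular that each required non-edge of $H$ corresponds to a non-separating pair in $G'$.

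The main obstacle---and here I part company with the statement as written---is the clique-graph half, which I believe is false. The natural test case is the $3$-sun $S_3$, consisting of a central triangle $a_1a_2a_3$ together with simplicial vertices $b_1,b_2,b_3$, where $b_i$ is adjacent to $a_i$ and $a_{i+1}$ (indices modulo $3$); this graph is chordal. If $S_3$ were isomorphic to $C(G)$ for a chordal $G$, then $G$ would have maximal cliques $A_1,A_2,A_3,B_1,B_2,B_3$ intersecting exactly as the corresponding vertices of $S_3$ are adjacent. The Helly property of the subtrees of a clique tree of $G$ yields a vertex in $A_1\cap A_2\cap A_3$ and, for each $i$, a vertex in $A_i\cap A_{i+1}\cap B_i$; applying the running-intersection property to these vertices forces the clique-tree path between each $A_i$ and $A_{i+1}$ to be a single edge, so $A_1,A_2,A_3$ would be pairwise adjacent in a tree, which is impossible. (Equivalently, a short case analysis shows that $S_3$ has no spanning tree of the kind demanded by Theorem \ref{SB}.) Hence $S_3$ is a chordal graph that is not the clique graph of any chordal graph, and the conjecture should be weakened---for instance by restricting $H$ to the strongly chordal graphs, or by retaining only the $C_R(G')$ assertion. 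Carrying the reduced-clique-graph half to completion for all chordal $H$ (including $S_3$, which, having no induced five-cycle, is not ruled out by Lemma \ref{no-five-holes}) is the part I would actually try to prove.
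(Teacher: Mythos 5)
This statement is one of the paper's open conjectures: the paper offers no proof of it, so there is no argument of the authors to compare yours against, and the question is whether your proposal stands on its own. It does not prove the conjecture; instead it argues the conjecture as stated is false, and on the substance your instinct appears to be correct. The $3$-sun $S_3$ is chordal, and it is not the clique graph of any chordal graph. One can verify this directly from Theorem \ref{SB}: every maximal clique of $S_3$ has exactly three vertices, so for any admissible spanning tree $T$ the path between adjacent vertices has at most two edges; a case analysis on how many of the triangle edges $a_1a_2$, $a_2a_3$, $a_3a_1$ lie in $T$ shows that zero or one such edge forces the required detours through the $b_i$'s to close a cycle, while two such edges (say $a_1a_2,a_2a_3$) leave $b_3$ with a $T$\dash path to its second neighbour passing through $a_2\notin N(b_3)$. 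Since a connected $H$ can only be $C(G)$ for connected $G$, this refutes the $C(G)$ half of the conjecture; it is also consistent with the known characterisation of clique graphs of chordal graphs as exactly the dually chordal graphs (Brandst\"adt, Dragan, Chepoi, and Voloshin), and one checks easily that no vertex of $S_3$ has a maximum neighbour.

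That said, the specific argument you give is broken at its first step. The maximal cliques of a chordal graph do \emph{not} form a Helly family: in $S_3$ itself the three outer triangles $\{b_i,a_i,a_{i+1}\}$ pairwise intersect but have empty common intersection, so ``the Helly property yields a vertex in $A_1\cap A_2\cap A_3$'' is unjustified as written. The step can be repaired: choose $x\in A_1\cap A_2$, $y\in A_2\cap A_3$, $z\in A_1\cap A_3$ and apply the Helly property to the subtrees $T_x,T_y,T_z$ of a clique tree to obtain a maximal clique $M\supseteq\{x,y,z\}$; then $M$ meets all of $A_1,A_2,A_3$, and since the only vertices of $S_3$ adjacent or equal to all of $a_1,a_2,a_3$ are $a_1,a_2,a_3$ themselves, $M$ is one of the $A_i$ and hence contains a vertex of the other two. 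With the analogous repair for $A_i\cap A_{i+1}\cap B_i$, your running-intersection step does go through: any internal node of the clique-tree path from $A_i$ to $A_{i+1}$ would have to lie in $\{A_1,A_2,A_3\}\cap\{A_i,A_{i+1},B_i\}$, so each such path is a single edge and $A_1,A_2,A_3$ form a triangle in a tree, a contradiction. Finally, be aware that the half of the conjecture you propose to keep (every chordal $H$ is $C_R(G')$ for some chordal $G'$) is only sketched: the apex construction behind Lemma \ref{path-products} does not obviously generalise, since the apex makes \emph{every} pair of maximal cliques intersect and you must then certify that precisely the edges of $H$ become separating pairs; none of that verification is carried out. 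So as a proof of the stated conjecture the proposal fails (necessarily, if your counterexample is right), but it identifies a genuine defect in the conjecture as stated, and the sensible repair is the one you indicate: drop or restrict the $C(G)$ clause, for instance to dually chordal or strongly chordal $H$.
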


Szwarcfiter and Bornstein present a polynomial-time algorithm for deciding whether a given graph is isomorphic to $C(G)$ for some chordal graph $G$~\cite{SB94}.
Their techniques do not obviously extend to recognising reduced clique graphs.
Nonetheless, we will make the following conjecture.

\begin{conjecture}
There is a polynomial-time algorithm for deciding whether a given graph is isomorphic to $C_{R}(G)$ for some chordal graph $G$.
\end{conjecture}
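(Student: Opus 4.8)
The plan is to look for an intrinsic, polynomially checkable characterisation of the graphs that arise as reduced clique graphs, in the spirit of the Szwarcfiter--Bornstein characterisation of clique graphs (Theorem~\ref{SB}), and then to turn that characterisation into a decision procedure. The natural starting point is the observation, recorded just after the proof of Theorem~\ref{maintheorem2}, that $C_{R}(G)$ is exactly the union of all clique trees of $G$. Consequently, if $H$ is isomorphic to $C_{R}(G)$ for some connected chordal graph $G$, then $H$ secretly carries an assignment of a maximal clique of $G$ to each of its nodes, together with the edge-weighting by intersection sizes, under which (by Theorem~\ref{maintheorem2}) the maximum-weight spanning trees are precisely the clique trees and every edge lies in some such tree. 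The first task is to determine which features of this hidden data are forced by $H$ alone.

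First I would try to recover from $H$ a candidate edge-weighting and a candidate system of clique labels. The propositions of Section~2 are the main lever here: Proposition~\ref{CRG-path} shows that two nodes whose cliques meet but which are non-adjacent must be joined by a path along which consecutive intersections strictly grow, and Proposition~\ref{clique-tree-path} controls how an edge-weight compares with the weights on a tree path. Together these impose rigid consistency conditions on any legitimate weighting, and I would aim to show that such a weighting, if it exists, is determined by $H$ up to the partial order it induces on the edges. Given the weighting, testing the clique-tree condition reduces to a maximum-weight-spanning-tree computation together with a Helly-type check, both of which are polynomial.

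The hard part will be the converse: certifying that a candidate structure on $H$ genuinely arises from some chordal graph. The defining adjacency rule for $C_{R}(G)$---that two cliques are adjacent exactly when they form a non-disjoint \emph{separating} pair---is a global separation condition in $G$ rather than a local intersection condition, and this is precisely what makes reduced clique graphs harder to recognise than clique graphs. One would have to prove that the separator data implicitly encoded by the \emph{non}-edges of $H$ can be realised simultaneously by a single chordal graph. I expect this global realisability check to be the crux. The restrictions already established, such as the absence of induced five-cycles (Lemma~\ref{no-five-holes}), show that the class is genuinely constrained and may furnish necessary conditions, but converting such local obstructions into a complete and efficiently verifiable certificate of realisability is the principal obstacle. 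Should a clean characterisation prove elusive, a reasonable fallback is to adapt the Szwarcfiter--Bornstein algorithm itself, constraining its search to spanning trees whose tree-paths witness separating pairs rather than mere non-empty intersections.
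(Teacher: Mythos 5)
This statement is one of the paper's open conjectures: the paper offers no proof of it, and explicitly remarks that the Szwarcfiter--Bornstein techniques ``do not obviously extend'' to reduced clique graphs. So there is no proof of record to compare yours against, and the relevant question is whether your text itself constitutes a proof. It does not. What you have written is a research programme, not an argument: every load-bearing step is stated as an intention (``I would try to recover \ldots'', ``I would aim to show \ldots'', ``One would have to prove \ldots''), and you yourself flag the decisive step --- showing that the separation data implicitly encoded by the non-edges of $H$ can be simultaneously realised by a single chordal graph --- as ``the principal obstacle'' without supplying any argument for it. No characterisation is proved, no algorithm is exhibited, and no polynomial running-time bound is established, so the conjecture remains exactly as open after your proposal as before it.

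To be fair, the plan is sensibly oriented: recognising that the non-edges of $H$ carry global separator constraints (whereas edges of $C(G)$ carry only local intersection constraints) correctly identifies why recognition of $C_{R}(G)$ is harder than recognition of $C(G)$, and Propositions~\ref{CRG-path} and~\ref{clique-tree-path} are plausibly the right levers for constraining any hidden legitimate weighting. But two cautions if you pursue this. First, your claim that a legitimate weighting ``is determined by $H$ up to the partial order it induces on the edges'' is itself unproven and doubtful as stated: non-isomorphic chordal graphs $G$ can have isomorphic reduced clique graphs with genuinely different intersection patterns, so the weighting need not be recoverable from $H$ alone. Second, the fallback of ``adapting the Szwarcfiter--Bornstein algorithm'' by constraining tree-paths to witness separating pairs is circular as phrased --- whether a pair is separating is a property of the unknown preimage $G$, not of $H$, which is precisely the realisability problem you deferred. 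A proof would need to replace both of these placeholders with actual mathematics.
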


More informally, we ask if there is a structural description for reduced clique graphs that is analogous to Theorem~\ref{SB}.

\section{Acknowledgments}

We thank the anonymous referee for their careful work, and for providing the construction in Proposition~\ref{reduced-clique-wheels}.

\begin{bibdiv}

\begin{biblist}

\bib{BP93}{article}{
   author={Blair, Jean R. S.},
   author={Peyton, Barry},
   title={An introduction to chordal graphs and clique trees},
   conference={
      title={Graph theory and sparse matrix computation},
   },
   book={
      series={IMA Vol. Math. Appl.},
      volume={56},
      publisher={Springer, New York},
   },
   date={1993},
   pages={1--29}
}

\bib{Buneman}{article}{
   author={Buneman, Peter},
   title={A characterisation of rigid circuit graphs},
   journal={Discrete Math.},
   volume={9},
   date={1974},
   pages={205--212}
}

\bib{GHP95}{article}{
   author={Galinier, Philippe},
   author={Habib, Michel},
   author={Paul, Christophe},
   title={Chordal graphs and their clique graphs},
   conference={
      title={Graph-theoretic concepts in computer science},
      address={Aachen},
      date={1995},
   },
   book={
      series={Lecture Notes in Comput. Sci.},
      volume={1017},
      publisher={Springer, Berlin},
   },
   date={1995},
   pages={358--371}
}

\bib{Gavril}{article}{
   author={Gavril, F\u{a}nic\u{a}},
   title={The intersection graphs of subtrees in trees are exactly the
   chordal graphs},
   journal={J. Combinatorial Theory Ser. B},
   volume={16},
   date={1974},
   pages={47--56}
}

\bib{HL09}{article}{
   author={Habib, Michel},
   author={Limouzy, Vincent},
   title={On some simplicial elimination schemes for chordal graphs},
   conference={
      title={DIMAP Workshop on Algorithmic Graph Theory},
   },
   book={
      series={Electron. Notes Discrete Math.},
      volume={32},
      publisher={Elsevier Sci. B. V., Amsterdam},
   },
   date={2009},
   pages={125--132}
   }

\bib{HS12}{article}{
   author={Habib, Michel},
   author={Stacho, Juraj},
   title={Reduced clique graphs of chordal graphs},
   journal={European J. Combin.},
   volume={33},
   date={2012},
   number={5},
   pages={712--735}
}

\bib{McK11}{article}{
   author={McKee, Terry A.},
   title={Minimal weak separators of chordal graphs},
   journal={Ars Combin.},
   volume={101},
   date={2011},
   pages={321--331}
}

\bib{MUU10}{article}{
   author={Matsui, Yasuko},
   author={Uehara, Ryuhei},
   author={Uno, Takeaki},
   title={Enumeration of the perfect sequences of a chordal graph},
   journal={Theoret. Comput. Sci.},
   volume={411},
   date={2010},
   number={40-42},
   pages={3635--3641}
}

\bib{MR4642470}{article}{
   author={Mayhew, Dillon},
   author={Probert, Andrew},
   title={Supersolvable saturated matroids and chordal graphs},
   journal={Adv. in Appl. Math.},
   volume={153},
   date={2024},
   pages={Paper No. 102616, 36}
}

\bib{Rose}{article}{
   author={Rose, Donald J.},
   title={Triangulated graphs and the elimination process},
   journal={J. Math. Anal. Appl.},
   volume={32},
   date={1970},
   pages={597--609}
}

\bib{SB94}{article}{
   author={Szwarcfiter, Jayme L.},
   author={Bornstein, Claudson F.},
   title={Clique graphs of chordal and path graphs},
   journal={SIAM J. Discrete Math.},
   volume={7},
   date={1994},
   number={2},
   pages={331--336}
}

\end{biblist}

\end{bibdiv}

\section{Statements \& declarations}

The first author was supported by a Rutherford Discovery Fellowship, managed by Royal Society Te Ap\={a}rangi.

The authors have no relevant financial or non-financial interests to disclose.

All authors contributed to the study conception and design.
Much of the material appeared previously in the PhD thesis of the second author.
All authors read and approved the final manuscript.

\section{Data availability statement}

Data sharing not applicable to this article as no datasets were generated or analysed during the current study.

\end{document}